\newtheorem{thm}{Theorem}[section]
\newtheorem{lem}[thm]{Lemma}
\newtheorem{cor}[thm]{Corrolary}
\DeclareMathOperator{\UT}{UT}
\DeclareMathOperator{\GL}{GL}
\DeclareMathOperator{\T}{T}
\let\le\leqslant
\let\ge\geqslant
\let\ph\varphi
\def\N{\mathbb N}
\def\UP{U\!\!P}
\begin{document}

\title{Bijections preserving commutators and automorphisms of unitriangular group}

\author{Waldemar Ho{\l}ubowski}
\email{w.holubowski@polsl.pl} 
\address{Institute of Mathematics, Silesian University of Technology\\
Kaszubska 23, 44-101 Gliwice, Poland}

\author{Alexei Stepanov}
\email{stepanov239@gmail.com}
\address{Dept. of Mathematics and Mechanics,
St.Petersburg State University,
Universitetsky 28, Peterhof,
198504, St. Petersburg, Russia}
\address
{St.Petersburg State Electrotechnical University,
Prof. Popova 5, 197376, St. Petersburg, Russia}

\keywords{PC-map; group automorphism; infinite unitriangular matrix; commutator; elementary transvection}
\subjclass{20F12, 20F14}

\thanks
{The work started during the visit of the second author to the Silesian University of Technology
supported by St.Petersburg State University travel grant 6.42.1273.2014 and State Financed
research task 6.38.191.2014. Work of the second author on this publication is partially supported
by the Ministry of  Education and Science of Russia (Contracts 114031340002 and 2.136.2014/K)
and RFBR grant 13-01-00709.}

\begin{abstract}
We complete characterization of bijections preserving commutators (PC-maps) in the group of
unitriangular matrices $\UT(n,F)$ over a field $F$, where $n\in\N\cup\{\infty\}$.
PC-maps were recently described up to almost identity PC-maps by M.\,Chen, D.\,Wang, and H.\,Zhai ($2011$)
for finite $n$ and by R.\,Slowik ($2013$) for $n=\infty$. An almost identity map is a map,
preserving elementary transvections. We show that an almost identity PC-map is a
multiplication by a central element. In particular, if $n=\infty$, then an almost
identity map is identity. Together with the result of R.\,Slowik
this shows that any PC-map of $\UT(\infty, F)$ is an automorphism.
\end{abstract}

\maketitle

%%====================================================================
\section*{Introduction}

Linear preserver problem is one of the most studied subjects in linear algebra
(see~\cite{LiT} -- \cite{CL}). It aims at characterization of operators on matrix spaces
(or semigroups, groups, Lie algebras) that leave some operations, properties, or relations invariant.
In particular, maps preserving commutativity~\cite{BD, BS1},  nilpotent matrices~\cite{BPW},
normality~\cite{BS2}, regularity~\cite{BKSS}, similarity~\cite{LiP2, RS}, idempotents~\cite{MB3}
were intensively studied.
In this note we focus on property that is stronger than commutativity.
We study bijective maps that preserve the operation of taking commutators, i.\,e.
bijective maps $\ph:G\to G$ such that
\begin{equation}\label{comm}
\ph({x,y}) = [\ph(x), \ph(y)]\text{ for all }x,y\in G,
\end{equation}
where $G$ is a group and $[x,y]= xy x^{-1} y^{-1}$ denotes group commutator.
Bijections satisfying (\ref{comm}) are called PC-maps. The
set $\operatorname{PC}(G)$ of all PC-maps form a group under composition of functions.

Let $n\in\N\cup\{\infty\}$. Denote by $\UT(n,F)$ the group of upper unitriagular matrices over a field~$F$.
In case $n=\infty$ rows and columns of a matrix are indexed by natural numbers.
We will focus on PC-maps on the group $G=\UT(n,F)$. Such maps were classified
up to so called \emph{almost identity} PC-maps by M.\,Chen, D.\,Wang, and H.\,Zhai \cite{ChenWZ}
 in finite dimensional case and by R.\,Slowik in~\cite{Slowik}
for $n=\infty$. An almost identity map $\ph : \UT(n,F) \to \UT(n,F)$  is a map,
preserving elementary transvections
$t_{ij}(\alpha)$, i.\,e. for all  $i<j$, $\alpha \in F$ we have
\begin{equation}\label{ai}
\ph(t_{ij}(\alpha))= t_{ij}(\alpha)
\end{equation}

In the finite dimensional case examples of a PC-map which is not an automorphism were given in~\cite{ChenWZ}, remark after Lemma~2.1.
In infinite dimensional case no nontrivial almost identity PC-maps was known.
R.\,Slowik conjectured that
the map $\ph(a) = e - \sum_{i<j} (a^{-1})_{ij}e_{ij}$ was a nontrivial PC-map.
However, it is easy to compute that with this $\ph$ already $(1,3)$-entry of the left and the
right hand sides of (\ref{comm}) are not equal.

The current note is to show that an almost identity PC-map $\ph$ on the group $\UT(n,F)$
has the form $\ph(a)=af(a)$, where $f$ is a function from $\UT(n,F)$ to its center $C$.
In particular, since the center of $\UT(\infty,F)$ is trivial, any almost identity PC-map
of this group is identity. Combining this fact with the result of R.\,Slowik we see that all
PC-maps of the group $\UT(\infty,F)$ are automorphisms.
In any case the result of the current note completes the classification of PC-maps
of the group of upper unitriangular matrices.

%%====================================================================
\textbf{Notation.}
The following notation will be used thoughout the article.
The identity element of a matrix group is denoted by $e$.
More precisely,
$e=e_{\infty}$ denotes the infinite $\mathbb{N} \times \mathbb{N}$ identity matrix,
whereas $e=e_n$ is the $n \times n$ identity matrix. The matrix unit $e_{ij}$ is a matrix with $1$ in
position $(i,j)$ and $0$ elsewhere. An elementary transvection is a matrix
$t_{ij}(\alpha)= e + \alpha e_{ij}$,
where $\alpha \in F$. The set of all rows $(u_1, \ldots , u_n)$, where $u_i \in F$, is denoted by $^n\!F$
and the set of all columns $(u_1, \ldots , u_n)^t$ by $F^n$. If $a=(a_{ij})$ is an invertible matrix,
then the entries of its inverse are denoted by $a'_{ij}$ .
We use the following notation for rows and columns of a matrix $a$:
$a_{i*}$ and $a'_{i*}$ are the $i$th rows of $a$ and $a^{-1}$ respectively, whereas
$a_{*i}$ and $a'_{*i}$ are the $i$th columns of these matrices.
By convention we put $\infty+m=\infty$ for all $m\in\mathbb Z$.

By $\T(n,F)$ we denote the group of all upper triangular matrices
and by $\UT(n,F)$ its subgroup, consisting of matrices with 1 on the diagonal places.
If $n$ is finite, then $\T(n,F)$ is a standard Borel subgroup of $\GL(n,F)$ and
$\UT(n,F)$ is its unipotent radical. If $n=\infty$, then these groups are subgroups of the group of
the group of all invertible column finite matrices, which we denote by $\GL(\infty,F)$.
In both cases $\T(n,F)$ is the normalizer of $\UT(n,F)$ in $\GL(n,F)$.

The center of a group $G$ is denoted by $C(G)$. For the group of our main interest $G=\UT(n,F)$
we write $C$ instead of $C(G)$. Clearly, if $n=\infty$, then $C$ is trivial.

%%====================================================================
\section{Standard PC-maps}

Here we describe all known PC-maps on $\UT(n,F)$. Most of them are standard automorphisms that
were introduced already in classical description of automorphisms of finite dimensional general
linear group over fields (see \cite{SW,R,D}).
PC-maps that are not automorphisms were found in~\cite{ChenWZ} but our
terminology differs from the terminology of this article.

%------------------------------------
\textbf{Quasi-inner automorphims}.
Define a \textit{quasi-inner automorphism} as a conjugation by an element from $\T(n,F)$.
Since $\T(n,F)$ is a semidirect product of $\UT(n,F)$ with the group of all diagonal matrices,
a quasi-inner automorphism is a composition of an inner and a diagonal automorphisms in the terminology
of~\cite{ChenWZ} and~\cite{Slowik}. Clearly, the set of all quasi-inner automorphisms is a subgroup
isomorphic to the quotient of $\T(n,F)$ by the centralizer of $\UT(n,F)$.

%------------------------------------
\textbf{Field automorphisms}.
Since $\UT(n,_-)$ is a functor, a field automorphism $\theta:F\to F$ induces an automorphism
$\UT(n,F)\to\UT(n,F)$. By abuse of language this automorphisms will be called \textit{field
automorphisms} of $\UT(n,F)$. Clearly, the set of all field automorphisms of $\UT(n,F)$
is a group.

%------------------------------------
\textbf{Graph automorphism}.
If $n$ is finite, then the automorphism of the Dynkin diagram of the root system of
$\GL(n,F)$ induces an anti-automorphism of $\GL(n,F)$. Composing this map with taking
inverse we get the so-called graph automorphism of $\GL(n,F)$.
In other words, graph automorphism of $\GL(n,F)$ is the composition of transposition, taking inverse,
and conjugation by the matrix with 1 on the side diagonal and zeros elsewhere.
Clearly this map leaves $\UT(n,F)$ invariant. The induced automorphism of $\UT(n,F)$
will be also called the \textit{graph automorphism}.
The identity map is also a (trivial) graph automorphism. Since the nontrivial graph automorphism
is an involution, the subgroup of graph automorphisms consists of 2 elements.

%------------------------------------
\textbf{Central PC-maps}.
Let $C(G)$ be the center of a group $G$.
A map $\psi:G\to G$ is called a \textit{central map} if $\psi(a)=af(a)$ for some function $f:G\to C(G)$
and all $a\in G$. For $a,b\in G$ and a central map $\psi$ we have
$$
[\psi(a),\psi(b)]=[af(a),bf(b)]=[a,b].
$$
Therefore a central map is a PC-map iff it preserves all commutators.

Denote by $C$ the center of the group $\UT(n,F)$. If $n=\infty$, then $C$ is trivial, otherwise
$$
C=\{t_{1n}(\alpha)\mid\alpha\in F\}.
$$
In other words,
$$
a\equiv b\mod C \text{ iff } a_{ij}=b_{ij} \text{ for all } (i,j)\ne(1,n).
$$
Thus, if $\ph:\UT(n,F)\to\UT(n,F)$ is a central PC-map, then $\ph(a)=at_{1n}\bigl(f(a)\bigr)$,
where $f$ is a function $\UT(n,F)\to F$. It is well-known that the commutator subgroup of
$\UT(n,F)$ consists of all matrices $a\in\UT(n,F)$ such that $a_{i\,i+1}=0$ for all $i=1,\dots,n-1$.
It is easy to see that any such matrix is a single commutator.

\begin{lem}\label{commutators}
Let $a\in\UT(n,F)$ be such that $a_{i\,i+1}=0$ for all $i$.
Then there exist matrices $b,c\in\UT(n,F)$ such that $a=[b,c]$.
\end{lem}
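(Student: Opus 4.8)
The plan is to reduce the statement to a conjugacy problem and to solve it by a triangular recursion. Fix $c=z:=e+\sum_i e_{i,i+1}$, the unitriangular matrix with all ones on the first superdiagonal; every column of $z$ has only two nonzero entries, so $z$ is a legitimate element of $\UT(n,F)$ also when $n=\infty$. Since $[b,z]=bzb^{-1}z^{-1}$, the equation $a=[b,z]$ is equivalent to $az=bzb^{-1}$, i.e. to $bz=wb$ where $w:=az$. Because $a_{i\,i+1}=0$ for all $i$, a direct check gives $w_{i\,i+1}=a_{ii}+a_{i\,i+1}=1$, so $w$ is again unitriangular with all ones on the first superdiagonal. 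Thus it suffices to find $b\in\UT(n,F)$ conjugating $z$ to $w$.

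First I would write out $bz=wb$ entry by entry. Using that $z_{kj}=1$ exactly for $k\in\{j-1,j\}$ one gets $(bz)_{ij}=b_{ij}+b_{i\,j-1}$, whereas $(wb)_{ij}=b_{ij}+\sum_{i<k\le j}w_{ik}b_{kj}$. Cancelling $b_{ij}$ and using $w_{i\,i+1}=1$, the equation at position $(i,j)$ becomes
\[
b_{i\,j-1}-b_{i+1\,j}=\sum_{i+1<k\le j}w_{ik}b_{kj}.
\]
For $j=i+1$ this holds automatically (both sides vanish), so the genuine constraints occur for $j\ge i+2$.

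The key observation is that the two unknowns on the left, $b_{i\,j-1}$ and $b_{i+1\,j}$, both lie on the $(j-i-1)$st superdiagonal of $b$, while every $b_{kj}$ on the right has $k>i+1$ and hence $j-k<j-i-1$, i.e. lies on a strictly lower superdiagonal. This makes the system triangular in the superdiagonal index $d=j-i-1$. I would solve it by induction on $d$: assuming all entries of $b$ on superdiagonals below $d$ are already fixed, the right hand side is a known quantity, and the relations for fixed $d$ read $b_{p\,p+d}-b_{p+1\,p+1+d}=(\text{known}_p)$ as $p$ runs upward. Choosing the top entry $b_{1\,1+d}$ freely (say $0$) and propagating downward by this telescoping recursion determines all remaining entries on the $d$th superdiagonal. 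The resulting $b$ is upper triangular with ones on the diagonal; since each column of an upper triangular matrix has only finitely many nonzero entries, $b$ automatically lies in $\UT(n,F)$ for finite and infinite $n$ alike. This $b$ satisfies $bz=wb$, whence $a=[b,z]$.

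The only point requiring care — and where I expect to spend the most effort — is verifying that the recursion is genuinely triangular, that is, that the right hand side involves no entry of $b$ on superdiagonal $d$ or higher; this is precisely the inequality $k>i+1$ forcing $j-k<d$. Once this is checked there is no consistency condition to address, because each superdiagonal is solved by an honest recursion with a single free parameter rather than by inverting a constrained linear system, and the argument is insensitive to whether $n$ is finite or infinite.
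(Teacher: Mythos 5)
Your proof is correct, but it is worth noting how it relates to the paper: the paper does not actually prove this lemma in situ --- it only cites Lemma~2.2 of the Gupta--Ho{\l}ubowski paper \cite{HolGupta} (stated there for $n=\infty$ over any associative ring) and remarks that the argument also works for finite $n$. You supply a complete, self-contained construction: fixing $c=z=e+\sum_i e_{i,i+1}$, rewriting $a=[b,z]$ as the intertwining equation $bz=wb$ with $w=az$ (the hypothesis $a_{i\,i+1}=0$ being exactly what makes $w$ again have all ones on the first superdiagonal), and solving for $b$ superdiagonal by superdiagonal. Your key verification is sound: at position $(i,j)$ the relation $b_{i\,j-1}-b_{i+1\,j}=\sum_{i+1<k\le j}w_{ik}b_{kj}$ has both unknowns on superdiagonal $d=j-i-1$ while every $b_{kj}$ on the right satisfies $j-k\le d-1$, so for each $d$ you get a telescoping recursion with one free parameter and no consistency conditions to check, in both the finite and infinite cases ($b$ is upper triangular, hence automatically column finite, so it lies in $\UT(\infty,F)$). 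Two further points in your favor: your recursion uses only addition and multiplication of matrix entries, never division, so your argument is valid over an arbitrary associative ring with $1$ --- matching the full generality of the cited lemma, not just fields --- and it handles finite and infinite $n$ uniformly, which substantiates the paper's unproved parenthetical claim that the infinite-dimensional proof of \cite{HolGupta} adapts to $n\in\N$. The only blemishes are cosmetic: the first column of $z$ has a single nonzero entry rather than two, and you could state explicitly that positions with $j\le i$ impose no conditions since all matrices involved are upper unitriangular; neither affects the argument.
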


\begin{proof}
This Lemma was proved in \cite{HolGupta} (Lemma 2.2) for $n=\infty$ and any associative ring $R$.
We note that the proof is valid also for $n \in \mathbb{N}$.
\end{proof}

In view of Lemma~\ref{commutators} a central map $\ph(a)=at_{1n}\bigl(f(a)\bigr)$, where
$f$ is a map $\UT(n,F)\to F$, is a PC-map iff $f(a)=0$ for all matrices $a$ with $a_{12}=\dots=a_{n-1\,n}=0$.
The definition of central PC-map in~\cite{ChenWZ} differs from our definition. The difference is
that in~\cite{ChenWZ} the map $f$ depends only on $a_{12},\dots,a_{n-1\,n}$.
It is easy to see that our central PC-map is a composition of a central PC-map of~\cite{ChenWZ}
and an almost identity map.
It is clear that the set of all central PC-maps is a subgroup of $\operatorname{PC}\bigl(\UT(n,F)\bigr)$.
Since a PC-map preserves the center of a group (see~\cite[Lemma~3.1(5)]{ChenWZ}),
this subgroup is normal.

%------------------------------------
\textbf{Subcentral PC-maps}.
Denote by $C_2$ the second center of $\UT(n,F)$, i.\,e. the preimage of the center of $\UT(n,F)/C$
under the reduction homomorphism. Recursively, define $C_m$ to be the preimage of the center
of the group $\UT(n,F)/C_{m-1}$ under the reduction homomorphism.
If $n=\infty$, then $C_2$ is trivial, otherwise
\begin{align*}
&C_2=\{t_{1\,n-1}(\alpha)t_{1n}(\beta)t_{2n}(\gamma)\mid\alpha,\beta,\gamma\in F\}\text{ and}\\
&a\equiv b\mod C_2 \text{ iff } a_{ij}=b_{ij} \text{ for all } (i,j)\notin\{(1,n-1),(1,n),(2,n)\}.
\end{align*}
Let $\psi:\UT(n,F)\to\UT(n,F)$ has the form $\psi(a)=af(a)$, where $f$ is a function from $\UT(n,F)$
to $C_2$. Then $\psi$ is called a \textit{subcentral map}.
It is easy to see that a subcentral PC-map preserves all double commutators $\bigl[a,[b,c]\bigr]$.
Similarly to Lemma~\ref{commutators} we can prove that a matrix $a\in\UT(n,F)$ is a double commutator
iff $a_{i\,i+1}=a_{i\,i+2}=0$ for all possible values of $i$. Hence such matrices are preserved by
a subcentral PC-map (cf. Lemma~\ref{1row}).

Clearly, a central map is a subcentral map. There is one more kind of subcentral maps that are
included into another set of standard PC-maps. Namely, a conjugation by an element from $C_3$ is a subcentral
map. To avoid this overlapping we define a \textit{standard subcentral PC-map} to be a map of the form
$$
\ph(a)=t_{2n}(\alpha a_{12})at_{1\,n-1}(\beta a_{n-1\,n}),\text{ where }\alpha,\beta\in F.
$$
A standard subcentral PC-map is exactly what was called a subcentral map in~\cite{ChenWZ}.
It is easy to see that the set of all subcentral PC-maps is a subgroup of
$\operatorname{PC}\bigl(\UT(n,F)\bigr)$. At the end of article we show that it is
normal. The set of all standard subcentral PC-maps
is a subgroup either, but it is not normal. To show this it suffices to consider
a conjugate to a standard subcentral PC-map by an appropriate inner automorphism.

%------------------------------------
\textbf{Permutable PC-maps}.
Since $\UT(3,F)$ coincides with its second center, any bijection $\UT(3,F)\to\UT(3,F)$
is subcentral. Therefore the case $n=3$ is an exception. In this case there
exists a more general kind of subcentral PC-maps than standard subcentral PC-maps.
Following~\cite{ChenWZ} we call them permutable. A \textit{permutable PC-map} is
a map of the form
$$
\ph(a)=
\begin{pmatrix}
1&\alpha a_{12}+\beta a_{23}& (\alpha\delta-\beta\gamma)a_{13}\\
0&              1           & \gamma a_{12}+\delta a_{23}     \\
0&              0           &              1                  \\
\end{pmatrix},
$$
where $\alpha,\beta,\gamma,\delta\in F$ and $\alpha\delta-\beta\gamma\ne0$.
%%====================================================================
\section{Not at the edge}\label{elemSec}

\textbf{In the rest of the article $\ph$ denotes an almost identity PC-map $\UT(n,F)\to \UT(n,F)$.}
In this section we show that all entries of $\ph(a)$ coincide with the corresponding entries
of $a$ except the first row and the last column.
During the proof we use subgroups $\UP_k$ of $\UT(n,F)$, consisting of matrices of the form
$\left(\begin{smallmatrix}e_k & \star\\ 0& e\end{smallmatrix}\right)$, where $\star$ denotes a
matrix with $k$ rows (possibly of infinite length). It is easy to see that $\UP_k$ is normal in
$\UT(n,F)$. In a finite dimensional case $\UP_k$ is the unipotent radical of $k$th standard
parabolic subgroup of $\operatorname{GL}(n,F)$.

\begin{lem}\label{elem}
Let $a\in\UT(n,F)$. Then $\ph(a)_{ij}=a_{ij}$ and $\ph(a)'_{ij}=a'_{ij}$ for all $j\le n-1$ and $i\ge2$.
\end{lem}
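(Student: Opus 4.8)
The plan is to exploit the only structure the hypothesis provides: $\ph$ fixes every $t_{ij}(\alpha)$ and commutes with the bracket, so that $\ph\bigl([t_{ij}(\alpha),a]\bigr)=\bigl[t_{ij}(\alpha),\ph(a)\bigr]$ for all $a$ and all $i<j$. First I would record the elementary identity
$$[t_{ij}(\alpha),a]=e+\alpha e_{ij}-\alpha\,ae_{ij}a^{-1}=e+\alpha e_{ij}-\alpha\,a_{*i}a'_{j*}\qquad(i<j),$$
valid for every $a\in\UT(n,F)$ (and, with $\ph(a)$ in place of $a$, because it uses only that $a^{-1}$ is unitriangular). Its two extreme specializations perform the extraction. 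For finite $n$, putting $j=n$ gives $[t_{in}(\alpha),a]=e-\alpha\sum_{k<i}a_{ki}e_{kn}$, an element of the last column subgroup $\UP_{n-1}$ whose entries are exactly the $i$th column of $a$; putting $i=1$ gives $[t_{1j}(\alpha),a]=e-\alpha\sum_{l>j}a'_{jl}e_{1l}\in\UP_1$, whose entries are the $j$th row of $a^{-1}$. Thus everything reduces to understanding how $\ph$ acts on the two abelian edge subgroups $\UP_1$ and $\UP_{n-1}$.

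The core step is a peeling identity showing $\ph$ fixes the interior entries of these subgroups. For $g\in\UP_{n-1}$ and $m<k<n$ a short computation gives $[t_{mk}(\beta),g]=t_{mn}(\beta g_{kn})$, a single transvection, hence fixed by $\ph$; therefore $[t_{mk}(\beta),\ph(g)]=t_{mn}(\beta g_{kn})$. Feeding $\ph(g)$ into the general formula above yields $\ph(g)_{*m}\,\ph(g)'_{k*}=e_{mk}-g_{kn}e_{mn}$, a rank-one matrix supported in row $m$. Since the $m$th row of the left side is $\ph(g)'_{k*}$ while every other row is a scalar multiple of it, this forces $\ph(g)'_{k*}$ to be the row with $1$ in column $k$ and $-g_{kn}$ in column $n$, together with the vanishing of the off-diagonal part of column $m$ of $\ph(g)$; reading off the $(k,n)$ entry gives $\ph(g)_{kn}=g_{kn}$ for every $k\ge2$. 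The mirror-image argument, using $[h,t_{kl}(\beta)]=t_{1l}(\beta h_{1k})$ for $h\in\UP_1$ and $1<k<l$, gives $\ph(h)_{1l}=h_{1l}$ for all $l\le n-1$.

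Combining the two halves then finishes the proof. Applying the $\UP_{n-1}$ statement to $g=[t_{in}(\alpha),a]$ (and the same formula for $\ph(a)$) gives $\ph(a)_{ki}=a_{ki}$ for all $2\le k<i$, that is, every interior entry of $a$; applying the $\UP_1$ statement to $h=[t_{1j}(\alpha),a]$ gives $\ph(a)'_{jl}=a'_{jl}$ for all $2\le j<l\le n-1$, that is, every interior entry of $a^{-1}$. This is exactly the assertion of the lemma for $i\ge2$ and $j\le n-1$.

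The main obstacle is twofold. First, because a PC-map need not be a homomorphism, $\ph(g)$ is not known a priori to lie in $\UP_{n-1}$ (or $\UP_1$): its support must be recovered from the rank-one relations, which is precisely what the peeling step does, and the single entry it cannot reach, the central position $(1,n)$ lying in both the first row and the last column, is exactly what the statement legitimately omits. Second, when $n=\infty$ there is no last column, so the extraction $j=n$ is unavailable. Here I would instead read the $i$th column of $a$ by a double commutator: the element $c=[t_{ij}(\alpha),a]$ lies in $\UP_i$ (as does $\ph(c)=[t_{ij}(\alpha),\ph(a)]$, by the commutator formula applied to $\ph(a)$), and $[t_{1k}(\gamma),c]\in\UP_1$ carries the $(1,j)$ entry $\gamma c_{kj}=-\gamma\alpha\,a_{ki}$; invoking the $\UP_1$ statement then recovers $a_{ki}$ for every $2\le k<i$, using only that there exist arbitrarily large finite indices $j>i$ with $j\le n-1$. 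Checking that this substitute yields every interior column uniformly in $n\in\N\cup\{\infty\}$ is the delicate point of the argument.
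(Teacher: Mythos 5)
Your proof is correct, and it runs on the same engine as the paper's: the rank-one conjugation identity $[t_{ij}(\alpha),a]=e+\alpha e_{ij}-\alpha a_{*i}a'_{j*}$, iterated so that a double commutator with elementary transvections extracts a single transvection whose parameter is one entry of $a$ or of $a^{-1}$, which $\ph$ must then fix. The packaging, however, differs in two ways. First, you interpose a standalone lemma saying that $\ph$ fixes the interior entries of the edge subgroups $\UP_1$ and $\UP_{n-1}$; since for an abstract $g$ there nothing is known about $\ph(g)$ beyond unitriangularity, this forces your rank-one disentangling of $\ph(g)_{*m}\ph(g)'_{k*}=e_{mk}-g_{kn}e_{mn}$. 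The paper avoids that layer entirely by only ever bracketing $\ph(a)$ with $t_{1i}$ and $t_{jn}$, whose conjugation formulas involve only the first column $\ph(a)_{*1}=e_{*1}$ and the last row $\ph(a)'_{n*}=e_{n*}$ --- trivial automatically for any unitriangular matrix --- so both sides collapse to transvections (e.g. $\bigl[[t_{1i}(-1),\ph(a)],t_{j\,j+1}(1)\bigr]=t_{1\,j+1}(\ph(a)'_{ij})$) with no case analysis; and indeed in your own applications the extra generality is redundant, because there $\ph(g)=[t_{in}(\alpha),\ph(a)]$ is already explicitly an element of $\UP_{n-1}$. Note also that your step ``reading off the $(k,n)$ entry gives $\ph(g)_{kn}=g_{kn}$'' silently passes from rows of $\ph(g)^{-1}$ to entries of $\ph(g)$; this needs one more line --- e.g. observe $\ph(g)^{-1}g\in\UP_1$ and conjugate by $g$, using that $\UP_1$ is normal, which is exactly the paper's device --- but it is routine. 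Second, your treatment of $n=\infty$ is genuinely different: the paper converts the inverse-entry equalities $\ph(a)'_{ij}=a'_{ij}$ ($i\ge2$) into entry equalities by that same normality argument in two lines, while you reconstruct the columns of $a$ via the second bracket $[t_{1k}(\gamma),[t_{ij}(\alpha),a]]\in\UP_1$, which works uniformly in $n$ at the price of one extra commutator. Your closing observation that the method can never reach the position $(1,n)$, lying in both the first row and the last column, correctly identifies why the lemma excludes exactly that corner.
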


\begin{proof}
In finite-dimensional case the first equation was proved in~\cite[Lemma~2.1]{ChenWZ}.
Our proof here is quite similar.
Let $i,j$ be natural numberes, $i\ge2$ and $j\le n-1$.
Note that for an arbitrary matrix $a\in \UT(n,F)$ we have
$$
[t_{1i}(-1),a]=t_{1i}(-1)(e+a_{*1}a'_{i*})
$$
and since $a_{*1}=e_{*1}$, then
$$
\bigl[[t_{1i}(-1),a],t_{j\,j+1}(1)\bigr]=t_{1\,j+1}(a'_{ij}).
$$
The same equation holds for $\ph(a)$, hence
\begin{multline*}
t_{1\,j+1}(a'_{ij})=\ph\Bigl(t_{1\,j+1}(a'_{ij})\Bigr)=
\ph\Bigl(\bigl[[t_{1i}(-1),a],t_{j\,j+1}(1)\bigr]\Bigr)=\\
\bigl[[t_{1i}(-1),\ph(a)],t_{j\,j+1}(1)\bigr]=t_{1\,j+1}\bigl(\ph(a)'_{ij}\bigr).
\end{multline*}

It follows that $\ph(a)'_{ij}=a'_{ij}$ for all $i\ge2$ and $j\le n-1$.
If $n=\infty$, this implies inclusion $\ph(a)^{-1}a\in \UP_1$. Conjugate this equation by~$a$.
Since $\UP_1$ is normal in $\UT(n,F)$, we have $a\ph(a)^{-1}\in \UP_1$.
Thus $a$ can differ from $\ph(a)$ only in the first row.
If $n$ is finite, we repeat the above computation using the last row instead of the first column.
Specifically, for $a\in\UT(n,F)$ we have
$$
[t_{jn}(-1),a]=t_{jn}(-1)(e+a_{*j}a'_{n*})
$$
and since $a'_{n*}=e_{n*}$, then
$$
\bigl[t_{i-1\,i}(1),[t_{jn}(-1),a]\bigr]=t_{i-1\,n}(a_{ij}).
$$
The same equation holds for $\ph(a)$, hence
\begin{multline*}
t_{i-1\,n}(a_{ij})=\ph\Bigl(t_{i-1\,n}(a_{ij})\Bigr)=
\ph\Bigl(\bigl[t_{i-1\,i}(1),[t_{jn}(-1),a]\bigr]\Bigr)=\\
\bigl[t_{i-1\,i}(1),[t_{jn}(-1),\ph(a)]\bigr]=t_{i-1\,n}\bigl(\ph(a)_{ij}\bigr).
\end{multline*}

Thus, in any case $\ph(a)_{ij}=a_{ij}$ for all $i\ge2$ and $j\le n-1$. 
\end{proof}

%%===============================================================
\section{Reduction to subcentral maps}\label{InftySec}

In this section we prove that an almost identity PC-map $\ph$ is subcentral.
Clearly, this is already enough to settle the infinite dimensional case.
In view of Lemma~\ref{elem} it suffices to show that $\ph(a)_{1k}=a_{1k}$ and $\ph(a)_{ln}=a_{ln}$
for all $a\in \UT(n,F)$, $k\le n-2$ and $l\ge3$.
We prove the first equation by induction on $k$, the proof of the second one is similar
and will be left to the reader.
More precisely, for the following conditions we prove implications
$X_k\implies Y_k\implies Z_k\implies X_{k+1}$, where $k\le n-3$
(for the second and the third implications we assume additionally $Z_{k-1}$ and $X_k$
respectively).

\begin{itemize}
\item{$X_k$:}
$\ph(a)_{1i}=a_{1i}$ for all $i=2,\dots,k$ ($k\le n-2$).
\item{$Y_k$:}
Let $y=t_{k+1\,k+2}(\beta)\prod_{i=1}^{k-1}t_{ik}(\alpha_i)$
for some $\beta,\alpha_1,\dots,\alpha_{k-1}\in F$ ($k\le n-3$). Then
$\ph(y)\in yC$.
\item{$Z_k$:}
$\ph$ preserves matrices of the form $z=\prod_{i=1}^{k}t_{i\,k+3}(\gamma_i)$ for all
elements $\gamma_1,\dots,\gamma_{k}\in F$  ($k\le n-3$).
\end{itemize}

Note that for $k=1$ all the conditions are trivial: $X_1$ is empty, whereas $Y_1$ and $Z_1$ follows from the
fact that $\ph$ is almost identity.

\begin{lem}\label{X-Y}
$X_k\implies Y_k$ for all $k\le n-3$.
\end{lem}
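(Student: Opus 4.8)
The plan is to show that $\ph(y)$ agrees with $y$ in every entry except possibly the central position $(1,n)$. By Lemma~\ref{elem} the two already agree off the first row and the last column, and by $X_k$ the first row agrees in columns $2,\dots,k$. Since $y_{1j}=0$ for $j>k$ and $y_{in}=0$ for $i\ge2$ (here $k\le n-3$, so $y$ has no entries in the last column), it remains to prove $\ph(y)_{1j}=0$ for $k+1\le j\le n-1$ and $\ph(y)_{in}=0$ for $i\ge2$; this gives $\ph(y)\in yC$, and $\ph(y)=y$ when $n=\infty$.

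For the first row I would probe the target entry directly rather than use the obvious commutators. Writing $Y=\ph(y)$, the conjugation formula yields
\[
[Y,t_{jm}(\gamma)]=\bigl(e+\gamma\,Y_{*j}Y'_{m*}\bigr)t_{jm}(-\gamma),
\]
whose $(1,m)$-entry, for any $2\le j<m$, works out to $\gamma\,Y_{1j}$. Since $\ph$ fixes $t_{jm}(\gamma)$ and preserves commutators, $[Y,t_{jm}(\gamma)]=\ph\bigl([y,t_{jm}(\gamma)]\bigr)$. Now fix $j$ with $k+1\le j\le n-1$ and choose any $m$ with $j<m\le n$; then $m\ge k+2$, so the $m$th row of $y^{-1}$ is $e_{m*}$, and the commutator collapses on the $y$-side: if $j\ne k+2$ the $j$th column of $y$ is $e_{*j}$ and $[y,t_{jm}(\gamma)]=e$, while for $j=k+2$ the factor $t_{k+1\,k+2}(\beta)$ contributes and $[y,t_{jm}(\gamma)]=t_{k+1\,m}(\gamma\beta)$. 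In either case the outcome is an elementary transvection (or the identity), hence fixed by $\ph$, with vanishing $(1,m)$-entry; comparing with the displayed formula forces $\gamma\,Y_{1j}=0$ and so $\ph(y)_{1j}=0$.

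For the last column I would pass to inverses. For $p\ge2$ the left commutator $[t_{1p}(-1),y]=e+\sum_{l>p}y'_{pl}e_{1l}$ is, for our $y$, one of the transvections $t_{1k}(-\alpha_p)$, $t_{1\,k+2}(-\beta)$, or the identity; applying $\ph$ and using that it fixes each of these shows $\ph(y)'_{pl}=y'_{pl}$ for all $l>p$, in particular $\ph(y)'_{pn}=0$. Together with Lemma~\ref{elem} this means $\ph(y)^{-1}$ and $y^{-1}$ coincide in every row but the first, and for unitriangular matrices this forces $\ph(y)$ and $y$ to coincide in every row but the first as well, whence $\ph(y)_{in}=y_{in}=0$ for $i\ge2$. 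Combining $X_k$, the first-row computation, Lemma~\ref{elem}, and this inverse argument gives $\ph(y)\in yC$. The main obstacle is the first-row step: the natural commutators $[y,t_{km}(\gamma)]$ merely reproduce the entry $y_{1k}$ already supplied by $X_k$ and reveal nothing about columns beyond $k$, so one is forced to commute against the target index $j$ itself and then verify that $[y,t_{jm}(\gamma)]$ always degenerates to an $\ph$-fixed matrix. The single dangerous column is $j=k+2$, and this is exactly the column tamed by the auxiliary factor $t_{k+1\,k+2}(\beta)$ built into $y$, which turns the potentially uncontrolled commutator into the single transvection $t_{k+1\,m}(\gamma\beta)$.
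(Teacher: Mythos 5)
Your proof is correct and takes essentially the same route as the paper: the paper kills the first-row entries via $[t_{j\,j+1}(-1),y]$ where you use $[y,t_{jm}(\gamma)]$ with general $m$ and $\gamma$, and it handles the last column via $[y,t_{1j}(1)]\in\UT_{n-1}$ where you use $[t_{1p}(-1),y]$ to pin whole rows of $\ph(y)^{-1}$ at once --- in both steps the identical idea that $y$'s special shape collapses these commutators to $\ph$-fixed transvections. Your final transfer from the rows of $\ph(y)^{-1}$ to the rows of $\ph(y)$ is the same $\UP_1$-normality trick already used in the proof of Lemma~\ref{elem}, so only cosmetic choices differ.
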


\begin{proof}
Let $b=[t_{j\,j+1}(-1),y]$, where $j>k$.
Since for all $j>k>i$ transvections $t_{ik}(\alpha_i)$ and $t_{j\,j+1}(1)$ commute, we have
$b=t_{k+1\,k+3}(\beta)$ for $j=k+2$ and $b=e$ otherwise.
Applying $\ph$ to both sides of this formula we see that $\ph(b)=[t_{j\,j+1}(-1),\ph(y)]$ equals
$t_{k+1\,k+3}(\beta)$ if $j=k+2$ and $e$ otherwise. In both cases the first row of $\ph(b)$
coincides with the first row of the identity matrix.
On the other hand, calculation shows that $0=\ph(b)_{1\,j+1}=\ph(y)_{1j}$ for all $k<j\le n-1$.
Hence $\ph(y)_{1j}=y_{1j}$ for all $k+1<j\le n-1$ and
by condition $X_k$ we have $\ph(y)_{1j}=y_{1j}$ for all $j\le k$.
For $n=\infty$ these equations together with Lemma~\ref{elem} already show that $\ph(y)=y$.

Let $\UT_{n-1}$ be the subgroup of $\UT(n,F)$ consisting of all matrices whose last column coincides
with the last column of the identity matrix.
If $n$ is finite, consider the matrix $d=[y,t_{1j}(1)]$ for all $2\le j\le n-1$.
It is easy to see that $d$ is a transvection or the identity matrix and belongs to $\UT_{n-1}$.
(here we use that $k+2<n$). Therefore $\ph(d)=[\ph(y),t_{1j}(1)]\in\UT_{n-1}$.
Notice that $0=\ph(d)_{1n}=\ph(y)'_{jn}$ for all $2\le j\le n-1$.
In other words, $\ph(y)'$ belongs to $\UT_{n-1}C$, hence $\ph(y)\in\UT_{n-1}C$.
It follows that $\ph(y)_{jn}=y_{jn}$ for all $2\le j\le n-1$ and
$\ph(y)_{jl}=y_{jl}$ for all $j$ and $l\le n-1$ by Lemma~\ref{elem} and the first paragraph
of the proof. Thus $\ph(y)$ is congruent to $y$ modulo the center. 
\end{proof}

The prove of Lemma~\ref{elem} can be called ``extraction of transvections". The proof of the next statement
uses another idea, which we name ``construction of required elements''.
Namely, to prove that $\ph(z)=z$ we construct $z$ as a commutator of elements, preserved by $\ph$.

\begin{lem}\label{Y-Z}
$Z_{k-1}\& Y_k\implies Z_k$ for all $2\le k\le n-3$.
\end{lem}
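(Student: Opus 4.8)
The goal is to prove $\ph(z)=z$ for every $z=\prod_{i=1}^{k}t_{i\,k+3}(\gamma_i)$, and I would follow the ``construction of required elements'' idea: realize $z$ as an iterated commutator whose inner factors are controlled by $\ph$, and then use two soft facts. First, a PC-map preserves iterated commutators, so $\ph\bigl([[u,v],w]\bigr)=[[\ph(u),\ph(v)],\ph(w)]$. Second, a central factor drops out of any commutator: if $c\in C$ then $[uc,v]=[u,v]$. Hence it suffices to exhibit $z$ as such a commutator whose inner factors are fixed by $\ph$ up to a central error, since that error will disappear. The key point to keep in mind is that $\ph$ is \emph{not} multiplicative, so $z$ cannot be built entry-by-entry; it must come out of a \emph{single} commutator.

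I would split on whether $\gamma_k=0$. Assume first $\gamma_k\neq0$ and set
\[
y=t_{k+1\,k+2}(-\gamma_k)\prod_{i=1}^{k-1}t_{ik}(\gamma_i\gamma_k^{-1}),
\]
which is precisely of the shape covered by hypothesis $Y_k$, so $\ph(y)\in yC$. A short matrix computation (using only that $k,k+1,k+2,k+3$ are distinct) shows that the inner commutator $[y,t_{k\,k+1}(1)]$ carries exactly the entries $\gamma_1,\dots,\gamma_k$ into rows $1,\dots,k$ of column $k+2$ (plus some irrelevant entries in column $k+1$), and that the outer commutator with $t_{k+2\,k+3}(1)$ then transports this column into column $k+3$ while annihilating the column $k+1$ part. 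The upshot is the identity
\[
\bigl[[y,t_{k\,k+1}(1)],t_{k+2\,k+3}(1)\bigr]=\prod_{i=1}^{k}t_{i\,k+3}(\gamma_i)=z .
\]
Since $t_{k\,k+1}(1)$ and $t_{k+2\,k+3}(1)$ are elementary transvections, they are fixed by $\ph$; writing $\ph(y)=yc$ with $c\in C$ and using the two soft facts above gives
\[
\ph(z)=\bigl[[\ph(y),t_{k\,k+1}(1)],t_{k+2\,k+3}(1)\bigr]=\bigl[[yc,t_{k\,k+1}(1)],t_{k+2\,k+3}(1)\bigr]=\bigl[[y,t_{k\,k+1}(1)],t_{k+2\,k+3}(1)\bigr]=z .
\]

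In the remaining case $\gamma_k=0$ the element $y$ degenerates and the above double commutator collapses to $e$, so here I would instead invoke $Z_{k-1}$. By that hypothesis $\ph$ fixes $z'=\prod_{i=1}^{k-1}t_{i\,k+2}(\gamma_i)$, and a one-line computation gives $[z',t_{k+2\,k+3}(1)]=\prod_{i=1}^{k-1}t_{i\,k+3}(\gamma_i)=z$; since $t_{k+2\,k+3}(1)$ is fixed by $\ph$, this yields $\ph(z)=[\ph(z'),t_{k+2\,k+3}(1)]=[z',t_{k+2\,k+3}(1)]=z$. This is the only place where the hypothesis $Z_{k-1}$ is used, which explains its presence in the statement.

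The main obstacle is exactly the non-multiplicativity of $\ph$ together with the fact that the ``new'' entry in row $k$ of column $k+3$ lies beyond the reach of Lemma~\ref{elem} (as does the first row, and, when $k=n-3$ so that $k+3=n$, the whole last column). One therefore cannot manufacture $z$ from separately preserved transvections and must find one iterated commutator that produces all of rows $1,\dots,k$ of column $k+3$ from $\ph$-controlled blocks. The design of $y$ is what makes this work: its $(k+1,k+2)$-transvection, routed through $t_{k\,k+1}(1)$ and $t_{k+2\,k+3}(1)$, supplies the otherwise inaccessible row $k$, while the $t_{ik}$-factors reproduce rows $1,\dots,k-1$. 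The two points needing care are verifying the displayed commutator identity and checking that the central indeterminacy of $\ph(y)$ coming from $Y_k$ genuinely cancels in the commutator.
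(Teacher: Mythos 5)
Your proof is correct and follows essentially the same route as the paper: the same case split on $\gamma_k$, with the $\gamma_k=0$ case handled via $Z_{k-1}$ and the commutator $[z',t_{k+2\,k+3}(1)]$, and the $\gamma_k\ne0$ case realizing $z$ as the double commutator $\bigl[[y,t_{k\,k+1}(\pm1)],t_{k+2\,k+3}(1)\bigr]$ for the $Y_k$-element $y$ with parameters $\beta=\mp\gamma_k$, $\alpha_i=\gamma_i\gamma_k^{-1}$, using that the central ambiguity in $\ph(y)\in yC$ cancels inside commutators. Your sign choices ($\beta=-\gamma_k$ and $t_{k\,k+1}(1)$ versus the paper's $\beta=\gamma_k$ and $t_{k\,k+1}(-1)$) differ only cosmetically, and your displayed commutator identity checks out.
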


\begin{proof}
In the proof we use matrices $y$ and $z$ defined in conditions $Y_k$ and $Z_k$.
If $\gamma_k=0$, then $Z_k$ follows from $Z_{k-1}$.
Indeded, by $Z_{k-1}$ the map $\ph$ preserves the matrix
$z'=\prod_{i=1}^{k-1}t_{i\,k+2}(\gamma_i)$, and $z=[z',t_{k+2\,k+3}(1)]$.
Since $\ph$ preserves the elementary transvections and commutators, we have $\ph(z)=z$.

Now, let $\gamma_k\ne0$. Calculation shows that
\begin{align*}
&[y,t_{k\,k+1}(-1)]=\prod_{i=1}^{k-1}\bigl(t_{i\,k+1}(-\alpha_i)t_{i\,k+2}(\alpha_i\beta)\bigr)
\cdot t_{k\,k+2}(\beta)\text{ and }\\
\big[&[y,t_{k\,k+1}(-1)],t_{k+2\,k+3}(1)\big]=\prod_{i=1}^{k-1}t_{i\,k+3}(\alpha_i\beta)
\cdot t_{k\,k+3}(\beta).
\end{align*}

By condition $Y_k$ we have $\ph(y)\in yC$ and the central factor vanishes after taking commutator
of $\ph(y)$ with an arbitrary element.
Hence $\ph$ preserves the left hand side of the latter equation.
It remains to notice that for $\beta=\gamma_k$ and $\alpha_i=\gamma_i/\gamma_k$
the right hand side of this equation coincides with $z$. 
\end{proof}

\begin{lem}\label{Z-X}
$X_{k}\& Z_k\implies X_{k+1}$ for all $k\le n-3$.
\end{lem}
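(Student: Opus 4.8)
The plan is to prove only the genuinely new case, namely $\ph(a)_{1\,k+1}=a_{1\,k+1}$; the entries $\ph(a)_{1i}=a_{1i}$ with $i\le k$ are furnished by hypothesis $X_k$, so these two together give $X_{k+1}$. As in Lemma~\ref{elem} the method is ``extraction of transvections'', but arranged so that the extracted matrix is \emph{exactly} one of the matrices preserved by condition $Z_k$, which then immediately pins down the top entry.

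Concretely, I would first establish a purely computational identity, valid for every $g\in\UT(n,F)$:
\[
\bigl[[g,t_{k+1\,k+2}(1)],t_{k+2\,k+3}(1)\bigr]=\prod_{i=1}^{k}t_{i\,k+3}(g_{i\,k+1}).
\]
To check it, compute the inner commutator $v=[g,t_{k+1\,k+2}(1)]=e+g_{*\,k+1}g'_{k+2\,*}-e_{k+1\,k+2}$, and observe that unitriangularity forces every row of $v$ with index $\ge k+2$ to coincide with the corresponding row of $e$; in particular $v'_{k+3\,*}=e_{k+3\,*}$, while the $(k+2)$nd column is $v_{*\,k+2}=e_{*\,k+2}+\sum_{i=1}^{k}g_{i\,k+1}e_{*i}$. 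Taking the outer commutator with $t_{k+2\,k+3}(1)$ then transports this column into the $(k+3)$rd place and cancels every other contribution, leaving precisely the displayed product. The decisive point is that \emph{no tail survives in the first row}: the value is genuinely a matrix of the shape occurring in $Z_k$, with $(1,k+3)$ entry equal to $g_{1\,k+1}$.

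With the identity in hand the argument is short. Apply $\ph$ to both sides with $g=a$. The right-hand side is a matrix of the form preserved by $Z_k$ (take $\gamma_i=a_{i\,k+1}$), so $\ph$ fixes it: $\ph\bigl(\prod_{i=1}^{k}t_{i\,k+3}(a_{i\,k+1})\bigr)=\prod_{i=1}^{k}t_{i\,k+3}(a_{i\,k+1})$. On the left, since $\ph$ fixes the two transvections and respects commutators, the image of the left-hand side equals $\bigl[[\ph(a),t_{k+1\,k+2}(1)],t_{k+2\,k+3}(1)\bigr]$, which by the same identity applied to $g=\ph(a)$ equals $\prod_{i=1}^{k}t_{i\,k+3}(\ph(a)_{i\,k+1})$. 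Equating the two products and comparing the $(1,k+3)$ entries yields $\ph(a)_{1\,k+1}=a_{1\,k+1}$, completing $X_{k+1}$.

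I expect the only real obstacle to be the first step: one must find a commutator word in $a$ and elementary transvections whose value is a clean $Z_k$-matrix carrying $a_{1\,k+1}$ in its top position, without spurious entries further to the right in the first row. The two-fold commutator above is tailored exactly to eliminate those spurious entries (they would come from the inverse row $g'_{k+2\,*}$, which is killed once the first commutator is taken); after that cancellation the invocation of $Z_k$ is automatic. Note also that the hypothesis $k\le n-3$ is precisely what guarantees that the column index $k+3$ is available.
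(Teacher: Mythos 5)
Your proposal is correct and follows essentially the same route as the paper: both form the double commutator $\bigl[[a,t_{k+1\,k+2}(1)],t_{k+2\,k+3}(1)\bigr]=\prod_{i=1}^k t_{i\,k+3}(a_{i\,k+1})$, apply $\ph$, invoke $Z_k$ to fix the right-hand side, and compare $(1,k+3)$ entries. The only cosmetic difference is that the paper organizes the cancellation of the ``tail'' via the normal subgroup $\UP_{k+2}$ commuting with $t_{k+2\,k+3}(1)$, while you verify the same cancellation by direct computation with rows and columns of the inner commutator.
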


\begin{proof}
Let $a\in \UT(n,F)$. Then
$$
[a,t_{k+1\,k+2}(1)]=(e+a_{*\,k+1}a'_{k+2\,*})t_{k+1\,k+2}(-1)\in
\prod_{i=1}^k t_{i\,k+2}(a_{i\,k+1})\UP_{k+2}.
$$
Since $\UP_{k+2}$ commutes with $t_{k+2\,k+3}(1)$, we have
$$
\bigl[[a,t_{k+1\,k+2}(1)],t_{k+2\,k+3}(1)\bigr]=\prod_{i=1}^k t_{i\,k+3}(a_{i\,k+1}).
$$
Recall that by condition $Z_k$ the map $\ph$ preserves the right hand side of the latter equation.
Therefore,
\begin{multline*}
\prod_{i=1}^k t_{i\,k+3}\bigl(\ph(a)_{i\,k+1})\bigr)=
\bigl[[\ph(a),t_{k+1\,k+2}(1)],t_{k+2\,k+3}(1)\bigr]=\\
\ph\Bigl(\bigl[[a,t_{k+1\,k+2}(1)],t_{k+2\,k+3}(1)\bigr]\Bigr)=
\prod_{i=1}^k t_{i\,k+3}(a_{i\,k+1}).
\end{multline*}
In particular $\ph(a)_{1\,k+1}=a_{1\,k+1}$. Together with condition $X_{k}$ this implies $X_{k+1}$.
\end{proof}

\begin{cor}\label{subcentral}
An almost identity PC-map $\ph:\UT(n,F)\to\UT(n,F)$ is a subcentral map.
\end{cor}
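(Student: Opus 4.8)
The plan is to run the inductive chain built from the three preceding lemmas to pin down the first row of $\ph(a)$, to control the last column by the symmetric argument, and then to combine these with Lemma~\ref{elem} so as to conclude that $\ph(a)$ and $a$ agree off the three entries spanning $C_2$.

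By Lemma~\ref{elem} we already have $\ph(a)_{ij}=a_{ij}$ for all $i\ge2$ and $j\le n-1$, so only the first row and the last column remain to be treated. For the first row I would argue by induction on $k$ that condition $X_k$ holds. The base case is trivial, since $X_1$ is empty and $Z_1$ holds because $\ph$ is almost identity. For the step, supposing $X_k$ and $Z_{k-1}$, Lemma~\ref{X-Y} gives $Y_k$, then Lemma~\ref{Y-Z} gives $Z_k$, and then Lemma~\ref{Z-X} gives $X_{k+1}$ (while $Z_k$ is carried forward to the next round). As all three implications hold throughout the range $k\le n-3$, this chain reaches $X_{n-2}$, i.e.\ $\ph(a)_{1i}=a_{1i}$ for all $i\le n-2$ and all $a\in\UT(n,F)$.

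The last column is handled by the dual induction indicated at the beginning of Section~\ref{InftySec}: interchanging the roles of the first row and the last column in the conditions $X_k,Y_k,Z_k$ (the same transpose/anti-diagonal symmetry already used in the finite case of Lemma~\ref{elem}) yields $\ph(a)_{ln}=a_{ln}$ for all $l\ge3$. Assembling the three ranges — $i\ge2,\ j\le n-1$ from Lemma~\ref{elem}, the positions $(1,i)$ with $i\le n-2$ from the first induction, and $(l,n)$ with $l\ge3$ from the second — one checks that the only entries in which $\ph(a)$ may still differ from $a$ are $(1,n-1)$, $(1,n)$, and $(2,n)$. Since these are exactly the free entries of $C_2$, we obtain $\ph(a)\equiv a\bmod C_2$, so $f(a)=a^{-1}\ph(a)\in C_2$ and $\ph(a)=af(a)$ is a subcentral map. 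When $n=\infty$ the group $C_2$ is trivial, so $\ph$ is in fact the identity.

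The substance of the corollary is carried by Lemmas~\ref{X-Y}--\ref{Z-X}; what is left is bookkeeping, and the one point deserving attention is the range matching, namely checking that implications valid only for $k\le n-3$ still deliver $X_{n-2}$ and, dually, the last column down to row~$3$, and that the degenerate case $n=3$ (where $C_2$ is the whole group and the assertion is vacuous) needs no separate treatment. The other place to be careful is the symmetric argument for the last column, which the text leaves to the reader: one should confirm that the duality genuinely swaps first row and last column while preserving the almost-identity hypothesis.
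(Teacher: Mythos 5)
Your proposal is correct and takes essentially the same route as the paper's own proof: Lemma~\ref{elem} for the entries with $i\ge2$, $j\le n-1$, the induction chain through $X_k$, $Y_k$, $Z_k$ via Lemmas~\ref{X-Y}, \ref{Y-Z}, \ref{Z-X} for the first row, the symmetric argument for the last column, and the conclusion that $\ph(a)\equiv a \bmod C_2$. Your extra bookkeeping --- checking that the implications valid for $k\le n-3$ indeed deliver $X_{n-2}$, and noting the vacuous case $n=3$ and the triviality of $C_2$ when $n=\infty$ --- only makes explicit what the paper leaves implicit.
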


\begin{proof}
By Lemma~\ref{elem} $\ph(a)_{ij}=a_{ij}$ for all $i\ge2$ and $j\le n-1$.
Using Lemmas~\ref{X-Y}, \ref{Y-Z}, and~\ref{Z-X}, by induction on $k=1,\dots,n-2$ one shows
that $\ph(a)_{1k}=a_{1k}$ for all $k\le n-2$. If $n=\infty$ this means that $\ph(a)=a$.

If $n$ is finite, one proves similarly that
$\ph(a)_{ln}=a_{ln}$ for all $l\ge 3$. Thus, $\ph(a)$ differs from $a$ at most in three
positions: $(1,n-1)$, $(1,n)$ and $(2,n)$. As we have already noticed this is equivalent to
saying that $\ph(a)$ is congruent to $a$ modulo $C_2$. 
\end{proof}

\begin{thm}\label{InftyThm}
Every almost identity PC-map on $\UT(\infty,F)$ is identity.
\end{thm}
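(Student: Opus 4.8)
The plan is to derive Theorem~\ref{InftyThm} almost immediately from the machinery already assembled, since the infinite-dimensional case is exactly where the central ambiguity disappears. The key structural fact is that $C$, the center of $\UT(\infty,F)$, is trivial: as noted in the Notation section, a central element would have to be a $t_{1n}(\alpha)$, but there is no last column when $n=\infty$, so no such nontrivial element exists. Consequently any statement of the form ``$\ph(a)\equiv a \bmod C$'' collapses to the genuine equality $\ph(a)=a$. This is the conceptual heart: the subcentral reduction, which in the finite case only pins down $\ph(a)$ up to three entries in positions $(1,n-1),(1,n),(2,n)$, leaves no slack at all in the infinite case.

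Concretely, I would invoke Corollary~\ref{subcentral}, which asserts that an almost identity PC-map $\ph$ is subcentral, i.\,e.\ $\ph(a)=af(a)$ with $f$ taking values in the second center $C_2$. Since for $n=\infty$ the group $C_2$ is trivial (this too is recorded explicitly before the subcentral discussion), the function $f$ must be constant equal to $e$, whence $\ph(a)=a$ for all $a\in\UT(\infty,F)$. Alternatively, and this is the route I would actually spell out to keep the argument self-contained, I would quote the two halves of the proof of Corollary~\ref{subcentral} directly: Lemma~\ref{elem} already gives $\ph(a)_{ij}=a_{ij}$ and $\ph(a)'_{ij}=a'_{ij}$ for all $i\ge 2$, $j\le n-1$, and the inductive chain of Lemmas~\ref{X-Y}, \ref{Y-Z}, \ref{Z-X} establishes $X_k$ for every $k$, hence $\ph(a)_{1k}=a_{1k}$ for all finite $k$. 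In the infinite setting there is no ``last column'' to worry about, so these two families of equalities cover \emph{all} positions $(i,j)$, and the matrices $\ph(a)$ and $a$ agree entry by entry.

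The one point that deserves a sentence of care is why the first row equalities $\ph(a)_{1k}=a_{1k}$ hold for \emph{all} $k$ and not merely for $k\le n-2$. In the finite case the induction in Corollary~\ref{subcentral} runs only up to $k=n-2$, leaving the top entries near the edge undetermined; but when $n=\infty$ the bound $k\le n-2$ is vacuous (every natural number satisfies $k\le\infty-2=\infty$), so the induction sweeps through every column index. Thus the honest obstacle is not a computation but a bookkeeping check: one must confirm that the hypotheses $k\le n-3$ guarding Lemmas~\ref{X-Y} and~\ref{Y-Z} impose no restriction when $n=\infty$, which follows from the convention $\infty+m=\infty$ fixed in the Notation. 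Once this is acknowledged, the theorem is simply the assembly of the already-proved entrywise equalities.

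I expect no serious difficulty; the work has all been front-loaded into the lemmas of the previous two sections, and the proof amounts to observing that triviality of $C$ (equivalently of $C_2$) upgrades every ``modulo the center'' congruence to a literal identity. The cleanest writeup is therefore a two-line deduction: apply Corollary~\ref{subcentral}, then use that $C_2$ is trivial for $n=\infty$ to conclude $\ph=\mathrm{id}$.
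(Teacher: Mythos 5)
Your proposal is correct and takes essentially the same route as the paper: Theorem~\ref{InftyThm} carries no separate proof there precisely because the proof of Corollary~\ref{subcentral} already observes that for $n=\infty$ the entrywise equalities from Lemma~\ref{elem} together with the induction via Lemmas~\ref{X-Y}, \ref{Y-Z}, and~\ref{Z-X} yield $\ph(a)=a$ outright. Your added bookkeeping remark --- that the bounds $k\le n-3$ are vacuous under the convention $\infty+m=\infty$, and that triviality of $C$ (equivalently $C_2$) upgrades the congruence to equality --- is exactly the implicit content of the paper's deduction.
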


Combining the above result with the main theorem of~\cite{Slowik} we obtain the
following statement.

\begin{cor}\label{InftyCor}
Let $F\not\cong\mathbb F_2$ be a field.
The group $\operatorname{PC}\bigl(\UT(n,F)\bigr)$ coincides with the
automorphism group $\operatorname{Aut}\bigl(\UT(n,F)\bigr)$.
Every PC-map of $\UT(n,F)$ is a composition of a quasi-inner automorphism and a field automorphism.
\end{cor}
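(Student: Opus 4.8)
The plan is to separate the claimed equality $\operatorname{PC}\bigl(\UT(\infty,F)\bigr)=\operatorname{Aut}\bigl(\UT(\infty,F)\bigr)$ into its two inclusions and to feed Theorem~\ref{InftyThm} into the structural reduction of~\cite{Slowik}. First I would dispose of the trivial inclusion: any automorphism is in particular a homomorphism, so it satisfies $\ph([x,y])=[\ph(x),\ph(y)]$, and being bijective it is a PC-map. Hence $\operatorname{Aut}\bigl(\UT(\infty,F)\bigr)\subseteq\operatorname{PC}\bigl(\UT(\infty,F)\bigr)$ with no work, and the whole content of the corollary lies in the reverse inclusion.

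For that reverse inclusion I would invoke the main theorem of~\cite{Slowik}, which (for $F\not\cong\mathbb F_2$) classifies PC-maps of $\UT(\infty,F)$ up to almost identity maps: every PC-map $\ph$ factors as a composition $\ph=\sigma\circ\psi$, where $\sigma$ is a composition of standard automorphisms and $\psi=\sigma^{-1}\circ\ph$ is again a PC-map which is almost identity. This is exactly the point at which Theorem~\ref{InftyThm} is applied: since the center of $\UT(\infty,F)$ is trivial, that theorem forces $\psi$ to be the identity. Therefore $\ph=\sigma$ is already a composition of standard automorphisms, and in particular an automorphism, which yields $\operatorname{PC}\bigl(\UT(\infty,F)\bigr)\subseteq\operatorname{Aut}\bigl(\UT(\infty,F)\bigr)$.

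It then remains to read off which standard maps actually survive when $n=\infty$. Inspecting the list in Section~1, the graph automorphism is defined only for finite $n$ and so does not occur; the central and subcentral PC-maps collapse to the identity because $C$ and $C_2$ are trivial for $n=\infty$; and the permutable maps are specific to $n=3$. The only remaining standard PC-maps are the quasi-inner automorphisms and the field automorphisms. Consequently every $\sigma$, and hence every PC-map of $\UT(\infty,F)$, is a composition of a quasi-inner automorphism and a field automorphism, as asserted, and the hypothesis $F\not\cong\mathbb F_2$ is precisely the one inherited from~\cite{Slowik}.

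The main obstacle I anticipate is bookkeeping rather than genuine difficulty: one must reconcile the terminology of~\cite{Slowik} with the vocabulary of Section~1 (its ``inner plus diagonal'' automorphisms against our quasi-inner automorphisms, and the handling of the almost identity factor) and check that Slowik's factorization really isolates a single almost identity PC-map to which Theorem~\ref{InftyThm} applies. Once this dictionary between the two formulations is set up, the argument is a direct concatenation of the two results and requires no further computation.
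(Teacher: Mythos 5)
Your proposal is correct and follows exactly the paper's route: the paper proves this corollary in one sentence by combining Theorem~\ref{InftyThm} with the main theorem of~\cite{Slowik}, which is precisely your argument, with the factorization through an almost identity PC-map, the triviality of the center of $\UT(\infty,F)$, and the survival of only quasi-inner and field automorphisms for $n=\infty$ spelled out explicitly. The details you fill in (the easy inclusion, the dictionary with Slowik's terminology, and the provenance of the hypothesis $F\not\cong\mathbb F_2$) are exactly what the paper leaves implicit.
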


%%===============================================================
\section{In dimension 3}

From now on we assume that $n$ is finite.
Clearly, if $n\le2$, then the group $\UT(n,F)$ is abelian. Hence any map $\UT(n,F)\to\UT(n,F)$
preserving $e$ is a central PC-map. For the case $n=3$, since $\UT(3,F)=C_2$,
the results of sections~\ref{elemSec} and~\ref{InftySec} are empty.
Actually this case is much simpler than $n\ge4$ but it requires separate consideration.

\begin{lem}
An almost identity PC-map $\UT(3,F)\to\UT(3,F)$ is central.
\end{lem}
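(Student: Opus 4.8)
The plan is to exploit the fact that for $n=3$ the group $\UT(3,F)$ is two-step nilpotent with commutator subgroup equal to its center $C=\{t_{13}(\beta)\mid\beta\in F\}$, so that every commutator is a single transvection in the corner. First I would observe that $\ph$ fixes $C$ pointwise: every central element is an elementary transvection $t_{13}(\beta)$, and an almost identity PC-map preserves all such transvections, so $\ph(t_{13}(\beta))=t_{13}(\beta)$ for all $\beta$. Next I would record the elementary commutator identity, valid for arbitrary $a,b\in\UT(3,F)$,
$$
[a,b]=t_{13}(a_{12}b_{23}-a_{23}b_{12})\in C,
$$
which is a routine direct calculation.

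With these two facts in hand the argument becomes a short functional-equation computation. Since $[a,b]\in C$ and $\ph$ is the identity on $C$, the PC-property gives
$$
t_{13}(a_{12}b_{23}-a_{23}b_{12})=[a,b]=\ph([a,b])=[\ph(a),\ph(b)]=t_{13}\bigl(\ph(a)_{12}\ph(b)_{23}-\ph(a)_{23}\ph(b)_{12}\bigr),
$$
whence $a_{12}b_{23}-a_{23}b_{12}=\ph(a)_{12}\ph(b)_{23}-\ph(a)_{23}\ph(b)_{12}$ for all $a,b$. To pin down the two off-diagonal functions I would then specialize $a$ to elementary transvections. Taking $a=t_{12}(1)$, for which $\ph(a)=a$ forces $a_{12}=\ph(a)_{12}=1$ and $a_{23}=\ph(a)_{23}=0$, the identity collapses to $b_{23}=\ph(b)_{23}$ for every $b$; symmetrically $a=t_{23}(1)$ yields $b_{12}=\ph(b)_{12}$ for every $b$. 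Thus $\ph(a)$ and $a$ agree in positions $(1,2)$ and $(2,3)$, i.e. $\ph(a)\equiv a\bmod C$, which is exactly the assertion that $\ph$ is a central map.

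The only conceptual obstacle worth flagging is that the preserved bilinear identity, read on its own, says merely that the pair $(\ph(a)_{12},\ph(a)_{23})$ is obtained from $(a_{12},a_{23})$ by a substitution preserving the $2\times2$ determinant form, and this is satisfied by any unimodular linear change of the edge coordinates rather than by the identity alone. What rigidifies the map to the identity in these two entries is precisely the almost identity hypothesis, which fixes $t_{12}$ and $t_{23}$ and thereby eliminates the nontrivial $\mathrm{SL}_2$ ambiguity; so the essential step is to feed the transvections $t_{12}(1)$ and $t_{23}(1)$ into the functional equation. No constraint on $F$ is needed beyond the existence of a nonzero scalar, which holds for every field.
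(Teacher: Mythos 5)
Your proof is correct and takes essentially the same route as the paper: after you specialize $a=t_{12}(1)$ and $a=t_{23}(1)$, your functional equation reduces exactly to the commutator identities $[t_{12}(1),b]=t_{13}(b_{23})$ and $[t_{23}(1),b]=t_{13}(-b_{12})$ that the paper applies $\ph$ to, using in both arguments that the almost identity hypothesis fixes the transvections $t_{12}(1)$, $t_{23}(1)$, and all of $t_{13}(F)=C$. Your preliminary bilinear formula and the remark on the $\mathrm{SL}_2$ ambiguity are a pleasant elaboration but do not change the substance of the argument.
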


\begin{proof}
Let $\ph:\UT(3,F)\to\UT(3,F)$ be an almost identity PC-map given by the formula
$$
\ph(a)=
\begin{pmatrix} 1&\alpha'&\beta' \\
                0&   1   &\gamma'\\
                0&   0   &  1    \end{pmatrix},\text{ where }
a=\begin{pmatrix} 1&\alpha &\beta  \\
                  0&   1   &\gamma \\
                  0&   0   &  1    \end{pmatrix}.
$$
Since $[t_{12}(1),a]=t_{13}(\gamma)$, we have
$$
t_{13}(\gamma)=\ph\bigl(t_{13}(\gamma)\bigr)=\ph\bigl([t_{12}(1),a]\bigr)=
[t_{12}(1),\ph(a)]=t_{13}(\gamma').
$$
It follows that $\gamma'=\gamma$. Similarly, $\alpha'=\alpha$. Hence $\ph(a)$ is congruent to $a$ modulo
the center.
\end{proof}

Combining the above result with the assertion of~\cite[Theorem~2.2]{ChenWZ} for $n=3$ we obtain
a complete description of PC-maps on $\UT(3,F)$.

\begin{thm}
Let $F$ be a field of characteristic not $2$. Then a PC-map $\UT(3,F)\to\UT(3,F)$ is a composition
of a permutable PC-map, a field automorphism and a central PC-map.
\end{thm}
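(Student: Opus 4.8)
The plan is to combine the lemma just proved with the classification of PC-maps obtained in~\cite{ChenWZ}, so that the only genuinely new ingredient is the identification of the almost identity part as a central map. Let $\psi\colon\UT(3,F)\to\UT(3,F)$ be an arbitrary PC-map. First I would apply \cite[Theorem~2.2]{ChenWZ}, specialized to $n=3$: since $\operatorname{char}F\ne2$, that result describes every PC-map of $\UT(3,F)$ up to an almost identity PC-map as a composition of a permutable PC-map $\pi$ and a field automorphism $\theta$. Concretely, this furnishes $\pi$ and $\theta$ such that the PC-map $\ph=(\pi\circ\theta)^{-1}\circ\psi$ fixes every elementary transvection, i.e.\ satisfies~(\ref{ai}) and is therefore an almost identity PC-map. (If the factorization in loc.\ cit.\ places the almost identity factor on the other side, the argument is entirely symmetric.)

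The second step is to identify this remaining factor. By the lemma above, the almost identity PC-map $\ph$ is central: $\ph(a)=a\,t_{13}\bigl(f(a)\bigr)$ for a suitable function $f\colon\UT(3,F)\to F$. Substituting this back into the factorization yields $\psi=\pi\circ\theta\circ\ph$, a composition of a permutable PC-map, a field automorphism, and a central PC-map, which is precisely the asserted form. Should one prefer the central factor on the opposite side, one may invoke the normality of the subgroup of central PC-maps in $\operatorname{PC}\bigl(\UT(3,F)\bigr)$, recorded in Section~1, to commute it past $\pi\circ\theta$.

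Since the substantive work is carried entirely by the lemma above and by \cite[Theorem~2.2]{ChenWZ}, I do not expect a serious obstacle here; the one point deserving care is purely terminological. Namely, one must confirm that the phrase ``up to almost identity PC-maps'' in~\cite{ChenWZ} produces a composition factor that literally fixes all transvections, so that the lemma applies without modification, and that for $n=3$ the permutable PC-maps and field automorphisms of loc.\ cit.\ coincide with those defined in Section~1. Both checks are immediate from the definitions, and the hypothesis $\operatorname{char}F\ne2$ enters only through \cite[Theorem~2.2]{ChenWZ}, not through the lemma.
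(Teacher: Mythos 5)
Your proposal is correct and matches the paper's argument exactly: the paper derives this theorem by combining the preceding lemma (an almost identity PC-map on $\UT(3,F)$ is central) with \cite[Theorem~2.2]{ChenWZ} specialized to $n=3$, which is precisely your two-step factorization. Your extra remarks on the placement of the central factor and the terminological check against \cite{ChenWZ} are sound but not needed beyond what the paper leaves implicit.
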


%%===============================================================
\section{Reduction to central maps}

From now on we assume that $4\le n<\infty$.
We prove that a subcentral almost identity PC-map must be central.
In this section we write matrices in a block form with respect to partition $n=1+(n-1)$.
First we study the image of a matrix
$\left(\begin{smallmatrix}1&u\\ 0&1\end{smallmatrix}\right)$ under an almost identity
PC-map $\ph$.

\begin{lem}\label{1row}
Let $u\in\,^{n-1}\!F$ and
$a=\left(\begin{smallmatrix}1&u\\ 0&e_{n-1}\end{smallmatrix}\right)$.
Then $\ph(a)=at_{1n}\bigl(f(u)\bigr)$ for some function $f:\,^{n-1}\!F\to F$.
If $u_1=u_2=0$, then $f(u)=0$.
\end{lem}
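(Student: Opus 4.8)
The plan is to start from the fact, already established in Corollary~\ref{subcentral}, that $\ph$ is subcentral: $\ph(a)$ agrees with $a$ in every entry except possibly the three positions $(1,n-1)$, $(1,n)$, $(2,n)$. Since $a\in\UP_1$ has off-diagonal entries only in the first row, the assertion $\ph(a)=at_{1n}\bigl(f(u)\bigr)$ amounts to showing that the two ``non-central'' exceptional positions $(2,n)$ and $(1,n-1)$ are in fact preserved, leaving only the central position $(1,n)$ free; the function $f$ is then defined by $f(u):=\ph(a)_{1n}-u_{n-1}$.

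To pin down those two positions I would use the extraction-of-transvections technique of Lemma~\ref{elem}. A direct computation shows that for any matrix $x$ whose nonidentity entries lie in the first row together with position $(2,n)$ one has
\[
[x,t_{jk}(\lambda)]=t_{1k}(\lambda x_{1j})\ \ (2\le j<k\le n)\quad\text{and}\quad[t_{12}(\mu),x]=t_{1n}(\mu x_{2n}).
\]
Both the transvections $t_{jk}$ and the commutators on the left are preserved by $\ph$. Applying the second identity to $a$ (where $a_{2n}=0$, so the value is $e$) and to $\ph(a)$ forces $\ph(a)_{2n}=0$; hence $\ph(a)$ again lies in $\UP_1$. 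Applying the first identity to $a$ and to $\ph(a)$ with $j=n-1$, $k=n$ then gives $\ph(a)_{1\,n-1}=u_{n-2}=a_{1\,n-1}$. Thus $\ph(a)$ and $a$ can differ only at $(1,n)$, which is exactly the first claim.

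For the second statement, assume $u_1=u_2=0$, so that $a=e+\sum_{j=4}^n u_{j-1}e_{1j}$, and realize $a$ as a commutator of elements that $\ph$ controls. Put $q=e+\sum_{j=4}^n u_{j-1}e_{3j}$; a short computation gives $[t_{13}(1),q]=a$. By subcentrality $\ph(q)$ differs from $q$ only in rows $1$ and $2$, namely in positions $(1,n-1),(1,n),(2,n)$, and every such correction term $e_{1,*}$ or $e_{2,*}$ is annihilated on the left by $e_{13}$, since $e_{13}e_{pm}=0$ for $p\in\{1,2\}$. Consequently conjugation of $\ph(q)$ by $t_{13}(1)$ produces the same first-row contribution as for $q$, and one computes $\ph(a)=[t_{13}(1),\ph(q)]=a$, i.e. $f(u)=0$.

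The main obstacle is this last vanishing. Unlike the positions $(2,n)$ and $(1,n-1)$, the central entry $(1,n)$ is invisible to ordinary commutators of $a$ with transvections, so it can only be reached by the construction-of-required-elements idea, writing $a$ as a commutator of preserved elements. The delicate point is the choice of auxiliary matrix: the naive choice in the second row leaves a genuine surviving contribution $\ph(q)_{2n}$ in the commutator, and it is precisely the shift to the third row, made possible by the hypothesis $u_1=u_2=0$, that kills all $C_2$-corrections. The degenerate case $n=4$, where $q=t_{34}(u_3)$ is a single elementary transvection and hence fixed by $\ph$ outright, should be recorded separately but causes no difficulty.
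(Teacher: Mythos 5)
Your proof is correct, and its first half is essentially the paper's own argument: both start from Corollary~\ref{subcentral} to confine the discrepancy to the three $C_2$-positions and then extract transvections, using $[\,\cdot\,,t_{n-1\,n}(1)]$ to fix the $(1,n-1)$ entry and a commutator with $t_{12}(1)$ to fix the $(2,n)$ entry. For the vanishing of $f(u)$ the two arguments diverge in a genuine, if modest, way. The paper writes $a$ (for $u_1=u_2=0$) as the \emph{double} commutator $a=\bigl[t_{12}(1),[t_{23}(1),q]\bigr]$ with the same third-row matrix $q=\prod_{i=4}^{n}t_{3i}(u_{i-1})$ that you use; the extra layer makes the argument self-correcting, since $\ph(q)\equiv q \bmod C_2$ implies the inner commutator is off from $[t_{23}(1),q]$ by at most a central factor, which the outer commutator annihilates automatically ($[x,yz]=[x,y]$ for central $z$) --- this is exactly the principle, stated in Section~1, that subcentral maps preserve all double commutators, and no commutation relations specific to the chosen transvections need to be verified. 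You instead realize $a$ as the single commutator $[t_{13}(1),q]$ and must therefore check by hand that the $C_2$-correction in $\ph(q)$ leaves no trace. Your check is sound, though as worded it is slightly loose: besides $e_{13}e_{pm}=0$ for $p\in\{1,2\}$ one also needs that row~$3$ of $\ph(q)^{-1}$ equals row~$3$ of $q^{-1}$ (true, since the correction and its inverse live in rows $1$ and $2$); the cleanest packaging is that $t_{13}(1)$ centralizes $C_2$, so $[t_{13}(1),qc]=[t_{13}(1),q]$ for every $c\in C_2$, valid for all $n\ge4$ including the degenerate case $n=4$ where $t_{1\,n-1}=t_{13}$. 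What each route buys: yours is one commutator shorter and makes explicit why the auxiliary matrix must sit in row~$3$ rather than row~$2$ --- precisely the role of the hypothesis $u_1=u_2=0$, as you correctly observe --- while the paper's double-commutator device is structurally robust (any realization as a double commutator would do) and is the same mechanism it reuses in its general remarks on subcentral maps.
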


\begin{proof}
By Lemma~\ref{subcentral} $\ph$ is a subcentral map, i.\,e. it can differ from $a$ only in positions
$(1,n-1)$, $(2,n-1)$, and $(1,n)$. We have
$\ph\bigl([a,t_{n-1\,n}(1)]\bigr)=\ph\bigl(t_{1n}(a_{1\,n-1})]\bigr)=t_{1n}(a_{1\,n-1})$. On the other hand this
matrix is equal to $[\ph(a),t_{n-1\,n}(1)]=t_{1n}(\ph(a)_{1\,n-1})$. Therefore,
$\ph(a)_{1\,n-1}=a_{1\,n-1}$. Similarly, $\ph(a)_{2n}=a_{2n}$. This proves the first assertion of the lemma.

If $u_1=u_2=0$, then the matrix $a$ equals to the double commutator
$$a=\bigl[t_{12}(1),[t_{23}(1),\prod_{i=4}^nt_{3i}(u_{i-1})]].$$ Hence
$\ph(a)=a$, i.\,e. $f(u)=0$. 
\end{proof}

\begin{lem}
Suppose that $\ph$ is given by the formula
$$
\ph(b)=bt_{2n}\bigl(g(b)\bigr)t_{1\,n-1}\bigl(h(b)\bigr)t_{1n}\bigl(k(b)\bigr)
$$
For some functions $g,h,k:\UT(n,F)\to F$.
Then $g(b)=\alpha b_{23}$ and $h(b)=\beta b_{n-2\,n-1}$ for some $\alpha,\beta\in F$.
\end{lem}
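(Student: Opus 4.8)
The plan is to exploit the subcentral structure established in Corollary~\ref{subcentral} and peel off the functions $g$ and $h$ one at a time by feeding carefully chosen pairs into the PC condition~(\ref{comm}). The map has the form $\ph(b)=bt_{2n}(g(b))t_{1\,n-1}(h(b))t_{1n}(k(b))$, so the three extra entries live in positions $(2,n)$, $(1,n-1)$, $(1,n)$. The key observation is that these three positions are the ``corners'' that get wiped out by most commutators, so to detect $g$ and $h$ I would commute $\ph(b)$ against transvections that survive into exactly the right slot. First I would show that $g$ is additive and $F$-homogeneous in a way that forces it to be a linear functional of the superdiagonal entries of $b$, and then cut that functional down to a single coordinate.

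The main steps, in order. First I would compute $[\ph(b),t_{n-1\,n}(1)]$ and $[t_{12}(1),\ph(b)]$ and compare with the images of $[b,t_{n-1\,n}(1)]$ and $[t_{12}(1),b]$; as in Lemma~\ref{1row} these commutators land in the center $t_{1n}(F)$ and annihilate $g,h,k$, giving nothing new but confirming the corners are inert under one-sided corner commutators. The productive move is to commute $\ph(b)$ against a transvection $t_{j\,n}(1)$ or $t_{1\,j}(1)$ whose commutator with the $t_{2n}(g(b))$ factor is nontrivial. Concretely, $[t_{1\,2}(1),t_{2n}(g(b))]=t_{1n}(g(b))$, so by choosing the second argument to interact only with the $(2,n)$-entry I can isolate $g(b)$ in a visible position and equate it across~(\ref{comm}). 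Running $b$ through products of transvections and using that $\ph$ already preserves every non-corner entry (Corollary~\ref{subcentral}), I would derive that $g(b)$ depends only on $b_{23}$ and is additive in it, hence $g(b)=\alpha b_{23}$ for a constant $\alpha=g(t_{23}(1))$; the homogeneity $g(t_{23}(\lambda))=\lambda\alpha$ comes from comparing $\ph(t_{23}(\lambda))$ with the PC relation applied to a commutator expressing $t_{23}(\lambda)$. The argument for $h$ is the mirror image under the graph-type symmetry $(i,j)\mapsto(n+1-j,n+1-i)$: here $[t_{1\,n-1}(h(b)),t_{n-1\,n}(1)]=t_{1n}(h(b))$ isolates $h$, and the same reasoning gives $h(b)=\beta b_{n-2\,n-1}$ with $\beta=h(t_{n-2\,n-1}(1))$.

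The hard part will be controlling the interference among the three corner factors when I commute, since a single commutator may move contributions from $g$, $h$, and $k$ simultaneously into position $(1,n)$; I must choose test transvections so that at most one corner factor is activated, or else track the $(1,n)$-entry carefully enough to separate the additive contributions. A second delicate point is establishing genuine $F$-linearity rather than mere additivity of $g$ and $h$: additivity follows cheaply from multiplicativity of the relevant transvection products under $\ph$, but scalar homogeneity requires a commutator identity of the form $[t_{i\,i+1}(\lambda),\,\cdot\,]$ that reproduces $\lambda$ faithfully, and one must check no characteristic obstruction (e.g. $\mathbb F_2$) sneaks in. Finally I would verify that the dependence really collapses to the single superdiagonal coordinate $b_{23}$ (resp.\ $b_{n-2\,n-1}$) and not to a larger combination, by testing against matrices $b$ supported away from these entries and invoking $Z_k$-type preservation to kill all other potential terms.
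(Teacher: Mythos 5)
Your opening move is sound and is in fact a special case of what the paper does: commuting $\ph(b)$ against $t_{12}(1)$, the factors $t_{1\,n-1}\bigl(h(b)\bigr)$ and $t_{1n}\bigl(k(b)\bigr)$ drop out, $[t_{12}(1),t_{2n}(g(b))]=t_{1n}\bigl(g(b)\bigr)$ survives, and the PC condition gives $\ph(d)=d\,t_{1n}\bigl(g(b)\bigr)$ for the first-row matrix $d=[t_{12}(1),b]$, whose row is $(0,b_{23},-b'_{24},\dots,-b'_{2n})$; in the notation of Lemma~\ref{1row} this says $g(b)=f(0,b_{23},-b'_{24},\dots,-b'_{2n})$. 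But from here your plan breaks down at exactly the point you flag as delicate. First, additivity of $g$ cannot come ``cheaply from multiplicativity'': a PC-map preserves commutators only, not products, so nothing makes $g$ additive in $b$ or in $b_{23}$. Second, your homogeneity step --- comparing $\ph(t_{23}(\lambda))$ with ``the PC relation applied to a commutator expressing $t_{23}(\lambda)$'' --- is impossible: the derived subgroup of $\UT(n,F)$ consists of matrices with vanishing superdiagonal, so $t_{23}(\lambda)$ is not a commutator and no PC relation reaches it. (Almost identity in fact gives $g(t_{23}(\lambda))=0$ outright, so your constant $\alpha=g(t_{23}(1))$ would be $0$; that stronger conclusion would be fine if proved, but you have no mechanism.) For the same reason your fallback of ``invoking $Z_k$-type preservation'' fails: the construction-of-required-elements method only reaches matrices in the derived subgroup, while the matrices carrying the $g$-dependence are precisely those with $b_{23}\ne0$.

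The missing idea is the paper's two-variable functional equation. Lemma~\ref{1row} gives $\ph(a)=a\,t_{1n}\bigl(f(u)\bigr)$ for \emph{every} first-row matrix $a=\left(\begin{smallmatrix}1&u\\ 0&e\end{smallmatrix}\right)$, not just for $a=t_{12}(1)$; commuting such a general $a$ against a general $b$ (the corner factors of $\ph(a)$ and $\ph(b)$ again die or centralize) yields $f\bigl(u(e-\tilde b^{-1})\bigr)=u_1g(b)$, equation~(\ref{EqForF}) of the paper. Specializing $b$ to the fixed matrix $c$ whose block $\hat c$ has $-1$ along the superdiagonal turns $u\mapsto u(e-\hat c)=(0,u_1,\dots,u_{n-2})$ into a shift, so with $\alpha:=g(c)$ one obtains $f(0,w_2,\dots,w_{n-1})=\alpha w_2$ for \emph{arbitrary} trailing coordinates; this single step simultaneously kills the dependence on $-b'_{24},\dots,-b'_{2n}$ and delivers exact $F$-linearity in $b_{23}$, with no characteristic hypothesis. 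Your scheme, restricted to single-transvection tests (that is, $u=e_1$), can never vary $u_2,\dots,u_{n-1}$, hence cannot decouple the arguments of the unknown function $f$, and so the conclusion $g(b)=\alpha b_{23}$ (and its mirror for $h$) remains out of reach.
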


\begin{proof}
Let $a$ and $u$ be as in the previous lemma. Then $\ph(a)=at_{1n}\bigl(f(u)\bigr)$.
Let $\tilde b\in\UT(n-1,F)$, $v\in\,^{n-1}\!F$, and
$b=\left(\begin{smallmatrix}1&v\\ 0&\tilde b\end{smallmatrix}\right)$.
Then we have
$$
[a,b]=\begin{pmatrix}1&u(e-{\tilde b}^{-1})\\ 0&e\end{pmatrix}\text{, hence }
\ph\bigl([a,b]\bigr)=
[a,b]\cdot t_{1n}\Bigl(f\bigl(u(e-{\tilde b}^{-1})\bigr)\Bigr).
$$
On the other hand this matrix is equal to
\begin{multline*}
\Bigl[\ph(a),\ph(b)\Bigr]=
\Bigl[at_{1n}\bigl(f(u)\bigr),bt_{2n}\bigl(g(b)\bigr)t_{1\,n-1}\bigl(h(b)\bigr)t_{1n}\bigl(k(b)\bigr)\Bigr]=\\
\Bigl[a,bt_{2n}\bigl(g(b)\bigr)\Bigr]=\Bigl[a,b\Bigr]\cdot\Bigl[a,t_{2n}\bigl(g(b)\bigr)\Bigr]=
\Bigl[a,b\Bigr]\cdot t_{1n}\bigl(u_1g(b)\bigr).
\end{multline*}

It follows that
\begin{equation}\label{EqForF}
f\bigl(u(e-{\tilde b}^{-1})\bigr)=u_1g(b)
\end{equation}
for all $b\in\UT(n,F)$ and $u\in\,^{n-1}\!F$.
Let $\hat c\in\UT(n-1,F)$ be the matrix with 1 on the main diagonal, $-1$ in all positions $(i,i+1)$, and zeros
elsewhere, so that $u(e-\hat c)=(0,u_1,\dots,u_{n-2})$.
Put $c=\left(\begin{smallmatrix}1&0\\ 0&\hat c^{-1}\end{smallmatrix}\right)$ and
denote $g(c)$ by $\alpha$.
From equation (\ref{EqForF}) with $c$ instead of $b$ we get $f(0,u_1,\dots,u_{n-2})=\alpha u_1$.

Note that $\tilde b'_{12}=b'_{23}=-b_{23}$, and therefore $u(e-{\tilde b}^{-1})=(0,u_1b_{23},\dots)$.
If $u_1=1$, then from (\ref{EqForF}) we get $g(b)=f(0,b_{23},\dots)=\alpha b_{23}$ as required.
The proof of the second equation is similar and will be left to the reader. 
\end{proof}

\begin{thm}\label{MAIN}
An almost identity PC-map $\ph:\UT(n,F)\to\UT(n,F)$ is a central map.
\end{thm}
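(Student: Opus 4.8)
The plan is to read off the two remaining scalars from the structural description already assembled, and then annihilate them using the almost identity hypothesis. By Corollary~\ref{subcentral} the map $\ph$ is subcentral, so $\ph(b)$ agrees with $b$ outside the positions $(1,n-1)$, $(1,n)$, $(2,n)$; since the corresponding transvections commute, this gives
$$
\ph(b)=b\,t_{2n}\bigl(g(b)\bigr)\,t_{1\,n-1}\bigl(h(b)\bigr)\,t_{1n}\bigl(k(b)\bigr)
$$
for some functions $g,h,k\colon\UT(n,F)\to F$. The preceding lemma then pins down $g(b)=\alpha b_{23}$ and $h(b)=\beta b_{n-2\,n-1}$ for fixed $\alpha,\beta\in F$. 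Since a central map is exactly one of the form $b\mapsto b\,t_{1n}(k(b))$ with $t_{1n}(k(b))\in C$, it suffices to prove $\alpha=\beta=0$: then $g\equiv h\equiv0$ and $\ph(b)=b\,t_{1n}(k(b))$ is central.

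To get $\alpha=0$, I would evaluate the displayed formula on a single elementary transvection. Multiplying out, the correction factor only adds $g(b)$ to the $(2,n)$ entry (column $2$ of $b$ carries a $1$ in row $2$), so $\ph(b)_{2n}=b_{2n}+g(b)$. Taking $b=t_{23}(1)$, the standing assumption $n\ge4$ forces $b_{2n}=0$, whence $\ph(t_{23}(1))_{2n}=g(t_{23}(1))=\alpha$. But $\ph$ is almost identity, so $\ph(t_{23}(1))=t_{23}(1)$ has $(2,n)$ entry $0$, and therefore $\alpha=0$. The argument for $\beta$ is the mirror image: one checks $\ph(b)_{1\,n-1}=b_{1\,n-1}+h(b)$, applies it to $b=t_{n-2\,n-1}(1)$ (here $n\ge4$ gives $n-2\ge2$, so the $(1,n-1)$ entry of this transvection vanishes), and reads off $\beta=h(t_{n-2\,n-1}(1))=0$ from almost identity. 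The two computations are independent, since $g$ only perturbs row $2$ and $h$ only row $1$.

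The individual steps are short; the only things to watch are the bookkeeping of which entries the central factor $t_{2n}(g)\,t_{1\,n-1}(h)\,t_{1n}(k)$ actually disturbs, and the repeated use of $n\ge4$ to guarantee that the probing transvections already carry zeros in the positions we read off. The main (and essentially only) obstacle is isolating the two scalars cleanly from the formula of the preceding lemma; no further commutator identities are required.
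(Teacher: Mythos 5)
Your proof is correct and follows essentially the same route as the paper: obtain $\ph(b)=b\,t_{2n}\bigl(\alpha b_{23}\bigr)t_{1\,n-1}\bigl(\beta b_{n-2\,n-1}\bigr)t_{1n}\bigl(k(b)\bigr)$ from Corollary~\ref{subcentral} and the preceding lemma, then evaluate at $t_{23}(1)$ and $t_{n-2\,n-1}(1)$ and use the almost identity hypothesis to force $\alpha=\beta=0$. The only difference is that the paper's proof also records the previously settled cases $n=\infty$ and $n=3$, which you implicitly exclude via the standing assumption $4\le n<\infty$; add one sentence citing those sections if the theorem is to be read as covering all $n$.
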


\begin{proof}
Cases $n=\infty$ and $n=3$ have been already established in two previous sections.
If $4\le n<\infty$, by the previous lemma we have
\begin{equation}\label{SubcentralInner}
\ph(b)=bt_{2n}(\alpha b_{23})t_{1\,n-1}(\beta b_{n-2\,n-1})t_{1n}\bigl(k(b)\bigr)
\end{equation}
for some $\alpha,\beta\in F$ and a function $k:\UT(n,F)\to F$.
In particular,
$$
\ph\bigl(t_{23}(1)\bigr)=t_{23}(1)t_{2n}(\alpha)t_{1n}(\star).
$$
On the other hand $\ph\bigl(t_{23}(1)\bigr)=t_{23}(1)$ by the definition of an almost identity map.
Hence $\alpha=0$. Similarly one prove that $\beta=0$. Thus $\ph$ is a central map.
\end{proof}

Note that the map $\ph$ given by formula~\ref{SubcentralInner} is the composition of conjugation by
$t_{3n}(\alpha)t_{1\,n-2}(-\beta)$ with a central PC-map.

Now we formulate corollaries of~\cite[Theorem~2.2]{ChenWZ} and the previous theorem.
To use the result of~\cite{ChenWZ} we must assume that \textit{characteristic of $F$
is not equal to $2$}. We keep this assumption for the rest of the section.

\begin{thm}
Let $4\le n<\infty$ and $\operatorname{char}F\ne2$. Then a PC-map $\UT(n,F)\to\UT(n,F)$ is a composition of a
graph automorphism, a standard subcentral PC-map, a quasi-inner automorphism, a field automorphism,
and a central PC-map.
\end{thm}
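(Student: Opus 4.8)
The plan is to derive the theorem by combining the classification of Chen, Wang, and Zhai with Theorem~\ref{MAIN}; the only genuinely new ingredient is that the residual ``almost identity'' factor left undetermined in~\cite{ChenWZ} is now identified as a central map. First I would recall the precise content of \cite[Theorem~2.2]{ChenWZ} under the standing hypotheses $4\le n<\infty$ and $\operatorname{char}F\ne2$: every PC-map $\ph$ of $\UT(n,F)$ can be written as $\ph=\psi\circ\eta$, where $\psi$ is a composition of a graph automorphism, a standard subcentral PC-map, a quasi-inner automorphism, a field automorphism, and a central PC-map in the sense of~\cite{ChenWZ}, while $\eta$ is an almost identity PC-map. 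Since the terminology of~\cite{ChenWZ} differs from the normalization fixed in Section~1, the first real task is a translation: their ``subcentral map'' is our standard subcentral PC-map, and their ``central map'' lies inside our class of central PC-maps, the two classes of central maps differing only by an almost identity map and our central maps forming a group that contains theirs.

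Next I would apply Theorem~\ref{MAIN} to $\eta=\psi^{-1}\circ\ph$. As $\ph$ and $\psi$ are both PC-maps, so is $\eta$; being almost identity, Theorem~\ref{MAIN} shows that $\eta$ is a central map. Substituting back, $\ph=\psi\circ\eta$ exhibits $\ph$ as a composition of a graph automorphism, a standard subcentral PC-map, a quasi-inner automorphism, a field automorphism, a central PC-map coming from $\psi$, and a further central PC-map $\eta$. To collapse the two central factors into the single one demanded by the statement, I would invoke the fact—established already in Section~1—that the central PC-maps form a \emph{normal} subgroup of $\operatorname{PC}\bigl(\UT(n,F)\bigr)$. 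This lets me slide the two central factors to the right-hand end of the composition and multiply them into one central PC-map, yielding exactly the asserted decomposition: a graph automorphism, a standard subcentral PC-map, a quasi-inner automorphism, a field automorphism, and a central PC-map.

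The main obstacle is not a computation but the faithful transcription of \cite[Theorem~2.2]{ChenWZ} into the present vocabulary. Concretely, I must check that the factor $\eta$ isolated by~\cite{ChenWZ} is genuinely almost identity in the sense of~(\ref{ai}) so that Theorem~\ref{MAIN} applies, that the central maps of~\cite{ChenWZ} really sit inside our group of central PC-maps, and that the final reordering of factors is legitimate. Each of these is handled by results already in hand: (\ref{ai}) and Theorem~\ref{MAIN} for the first, the discussion of central maps in Section~1 for the second, and the normality of the subgroup of central PC-maps for the third. The hypothesis $\operatorname{char}F\ne2$ enters only to make \cite[Theorem~2.2]{ChenWZ} available, and it is in force throughout, so no further restriction is needed.
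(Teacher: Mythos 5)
Your proposal is correct and takes essentially the same route as the paper, which states this theorem without a written proof as a direct corollary of \cite[Theorem~2.2]{ChenWZ} and Theorem~\ref{MAIN}, exactly the combination you carry out. Your additional care --- translating the terminology of \cite{ChenWZ} (their subcentral maps being the standard subcentral PC-maps, their central maps lying in the group of central PC-maps) and merging the two central factors using the normality of the subgroup of central PC-maps established in Section~1 --- merely makes explicit what the paper leaves to the reader.
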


Let $G=\UT(n,F)$. Denote by $\operatorname{SC-PC}(G)$ the set of all
subcentral PC-maps.

\begin{lem}
$\operatorname{SC-PC}(G)$ is a normal subgroup of $\operatorname{PC}(G)$.
\end{lem}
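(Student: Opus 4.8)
The plan is to avoid any explicit description of a general PC-map and instead to argue entirely in the quotient group $G/C_2$. Recall that a subcentral map is precisely a map $\psi:G\to G$ with $a^{-1}\psi(a)\in C_2$ for all $a$, that is, a map inducing the identity on $G/C_2$. Since $\operatorname{SC-PC}(G)$ has already been shown to be a subgroup of $\operatorname{PC}(G)$, to prove normality it suffices to check that for every $\sigma\in\operatorname{PC}(G)$ and every $\psi\in\operatorname{SC-PC}(G)$ the conjugate $\sigma\psi\sigma^{-1}$ again lies in $\operatorname{SC-PC}(G)$. As a product of PC-maps it is automatically a PC-map, so the only point is to show that it is subcentral.

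First I would record that every PC-map preserves $C_2$. A PC-map $\sigma$ fixes $e$, since $e=[e,e]$ is sent to $[\sigma(e),\sigma(e)]=e$, and it preserves the center as in \cite[Lemma~3.1(5)]{ChenWZ}: from $[\sigma(z),\sigma(x)]=\sigma([z,x])=\sigma(e)=e$ and the surjectivity of $\sigma$ one gets $\sigma(C)=C$. Since $C_2$ is the preimage of the center of $G/C$, a matrix $z$ lies in $C_2$ iff $[z,x]\in C$ for all $x$; applying the same reasoning one level up, from $[\sigma(z),\sigma(x)]=\sigma([z,x])\in\sigma(C)=C$ and surjectivity we obtain $\sigma(C_2)\subseteq C_2$, and since $\sigma^{-1}$ is also a PC-map this gives $\sigma(C_2)=C_2$. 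Thus $\sigma$ descends to a bijection $\bar\sigma$ of $G/C_2$.

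Next I would observe that $\bar\sigma$ is itself a PC-map of $G/C_2$. Because the reduction $p:G\to G/C_2$ is a homomorphism, $p([x,y])=[p(x),p(y)]$, and combining this with $\sigma([x,y])=[\sigma(x),\sigma(y)]$ yields $\bar\sigma([\bar x,\bar y])=[\bar\sigma(\bar x),\bar\sigma(\bar y)]$ for all $\bar x,\bar y\in G/C_2$; here every element and every commutator in the quotient lifts to $G$, so the identity holds on all of $G/C_2$. Moreover the assignment $\sigma\mapsto\bar\sigma$ is compatible with composition and inverses, i.e. $\overline{\sigma\chi}=\bar\sigma\,\bar\chi$ and $\overline{\sigma^{-1}}=\bar\sigma^{-1}$, which follows at once from the relation $p\circ\sigma=\bar\sigma\circ p$.

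Finally, membership $\psi\in\operatorname{SC-PC}(G)$ means exactly that $\bar\psi=\operatorname{id}$ on $G/C_2$. Hence $\overline{\sigma\psi\sigma^{-1}}=\bar\sigma\,\bar\psi\,\bar\sigma^{-1}=\bar\sigma\,\bar\sigma^{-1}=\operatorname{id}$, so $a^{-1}(\sigma\psi\sigma^{-1})(a)\in C_2$ for all $a$; that is, $\sigma\psi\sigma^{-1}$ is subcentral. Being also a PC-map, it belongs to $\operatorname{SC-PC}(G)$, and normality follows. I expect the only steps requiring genuine care to be the preservation $\sigma(C_2)=C_2$ — the inductive passage from the center to the second center — together with the verification that reduction modulo $C_2$ carries PC-maps to PC-maps; once these functorial facts are in place the conclusion is immediate and, notably, uses no information whatsoever about the concrete form of $\sigma$.
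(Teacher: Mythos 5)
Your preliminary steps are all correct: $\sigma(e)=e$, $\sigma(C)=C$, and the inductive argument giving $\sigma(C_2)=C_2$ (using that $\sigma^{-1}$ is again a PC-map) are fine. The gap is the sentence ``Thus $\sigma$ descends to a bijection $\bar\sigma$ of $G/C_2$.'' Setwise invariance $\sigma(C_2)=C_2$ only says that the single coset $eC_2$ is mapped to itself; since $\sigma$ is \emph{not} a homomorphism, it does not follow that $\sigma(aC_2)=\sigma(a)C_2$ for every $a\in G$, which is exactly what well-definedness of $\bar\sigma$ requires. A bijection can fix $C_2$ setwise and still scramble the fibres of $G\to G/C_2$. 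Moreover this coset-preservation statement is not a technicality but the entire content of the lemma: unwinding your last paragraph, for $\psi\in\operatorname{SC-PC}(G)$ and $b=\sigma^{-1}(a)$ one has $\psi(b)=bz$ with $z\in C_2$, and what must be shown is precisely $\sigma(bz)\in\sigma(b)C_2$. If you try to extract this from the PC-property alone, you are led (via $[bz,x]=[z,x]\,[b,x]$ with $[z,x]\in C$) to needing $\sigma(ch)\equiv\sigma(h)\bmod C$ for $c\in C$ and $h$ a commutator, and nothing in your argument delivers this; it is asserted, not proved.

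The paper closes exactly this hole by a different route. For $4\le n<\infty$ it invokes the classification theorem just proved --- $\operatorname{PC}(G)$ is generated by $\operatorname{Aut}(G)$ and $\operatorname{SC-PC}(G)$ --- so it suffices to check that automorphisms normalize $\operatorname{SC-PC}(G)$, and this holds because $C_2$ is a characteristic subgroup: an automorphism, being a homomorphism, genuinely does descend to $G/C_2$. The cases $n\le3$ (where $\operatorname{SC-PC}(G)=\operatorname{PC}(G)$) and $n=\infty$ (where subcentral maps are trivial) are dispatched separately. Note also that the lemma sits in a section with the standing hypothesis $\operatorname{char}F\ne2$ inherited from the classification of~\cite{ChenWZ}, which your argument silently drops. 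If you could prove directly that every PC-map of $\UT(n,F)$ maps $C_2$-cosets to $C_2$-cosets, your quotient argument would go through and would in fact be stronger than the paper's (no characteristic assumption, no appeal to the classification); as written, however, that key step is missing.
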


\begin{proof}
If $n\le3$, then $\operatorname{SC-PC}(G)=\operatorname{PC}(G)$.
If $n=\infty$, then the center is trivial and there are no nontrivial subcentral maps.
If $4\le n<\infty$, then by the previous theorem $\operatorname{PC}(G)$ is generated by
$\operatorname{SC-PC}(G)$ and $\operatorname{Aut}(G)$.
Therefore it suffices to show that $\operatorname{SC-PC}(G)$ is normalized by all automorphisms
of $G$. But this follows from the fact that the second center is a characteristic subgroup.
\end{proof}

The next assertion follows imediately from Corollary~\ref{InftyCor}, Theorem~\ref{MAIN},
and the previous lemma.

\begin{cor}
$\operatorname{PC}(G)=\operatorname{Aut}(G)\cdot\operatorname{SC-PC}(G)$.
\end{cor}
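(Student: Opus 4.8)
The plan is to prove the two inclusions separately, with the forward one essentially formal and the reverse one carrying all the content. Both $\operatorname{Aut}(G)$ and $\operatorname{SC-PC}(G)$ sit inside $\operatorname{PC}(G)$: automorphisms preserve commutators, and subcentral PC-maps are PC-maps by definition. Since the previous lemma shows that $\operatorname{SC-PC}(G)$ is normal in $\operatorname{PC}(G)$, the product set $\operatorname{Aut}(G)\cdot\operatorname{SC-PC}(G)$ is in fact a subgroup of $\operatorname{PC}(G)$; this yields the inclusion $\operatorname{Aut}(G)\cdot\operatorname{SC-PC}(G)\subseteq\operatorname{PC}(G)$ and reduces the whole statement to showing that $\operatorname{Aut}(G)$ and $\operatorname{SC-PC}(G)$ generate $\operatorname{PC}(G)$. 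I would in fact organise everything around the elementary fact that if a group $P$ is generated by a subgroup $A$ and a normal subgroup $N$, then $P=A\cdot N$ as a set.

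It then remains to check that $\operatorname{PC}(G)=\langle\operatorname{Aut}(G),\operatorname{SC-PC}(G)\rangle$, which I would settle by the same case split on $n$ used in the previous lemma. For $n=\infty$, Corollary~\ref{InftyCor} already gives $\operatorname{PC}(G)=\operatorname{Aut}(G)$, and $\operatorname{SC-PC}(G)$ is trivial because the center is trivial, so the generation is immediate. For $n\le3$ the group equals its second center, whence $\operatorname{PC}(G)=\operatorname{SC-PC}(G)$ and again there is nothing to prove.

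The one substantive case is $4\le n<\infty$. Here I would invoke the decomposition theorem proved just above --- itself a consequence of Theorem~\ref{MAIN} together with the classification of~\cite{ChenWZ} --- which asserts that every PC-map is a composition of a graph automorphism, a standard subcentral PC-map, a quasi-inner automorphism, a field automorphism, and a central PC-map. The graph, quasi-inner, and field factors lie in $\operatorname{Aut}(G)$, while the standard subcentral factor and the central factor lie in $\operatorname{SC-PC}(G)$ (a central map is subcentral because $C\subseteq C_2$). Thus each factor supplied by the theorem lies in $\operatorname{Aut}(G)\cup\operatorname{SC-PC}(G)$, so these two subgroups generate $\operatorname{PC}(G)$, and the elementary fact above finishes the argument.

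I do not anticipate a real obstacle, since the content has already been extracted by Theorem~\ref{MAIN}, by the decomposition theorem, and by the normality lemma. The single point that deserves care --- and the precise reason normality is indispensable --- is the passage from ``generated by'' to the product set: a composition such as $g\,s\,q\,f\,c$ (graph, standard subcentral, quasi-inner, field, central) has its two subcentral factors $s$ and $c$ interleaved with the automorphisms $q$ and $f$, and to regroup it as (automorphism)$\cdot$(subcentral map) one conjugates the inner subcentral factor past the intervening automorphisms, which keeps it inside $\operatorname{SC-PC}(G)$ exactly because that subgroup is normal. This is nothing more than the identity $AN=NA$ for a normal subgroup $N$, so no genuine difficulty remains.
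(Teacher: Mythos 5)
Your proof is correct and follows essentially the same route as the paper: the paper derives the corollary immediately from Corollary~\ref{InftyCor}, Theorem~\ref{MAIN} (via the decomposition theorem), and the normality lemma, which is exactly your combination of the case split, the five-factor decomposition, and the regrouping identity $AN=NA$ for the normal subgroup $\operatorname{SC-PC}(G)$. You merely make explicit the details (sorting the five factors into $\operatorname{Aut}(G)$ and $\operatorname{SC-PC}(G)$, and why normality turns generation into a product set) that the paper leaves to the reader.
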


%%===================================================================


\begin{thebibliography}{10}

\bibitem {LiT}
C.K. Li and N.K. Tsing, Linear preserver problems: A brief introduction and some special
techniques, Linear Algebra Appl. 162/164 (1992), pp. 217--235.
\bibitem {LiP1}
C.K. Li and S. Pierce, Linear preserver problems, Am. Math. Monthly 108 (2001),
pp. 591--605.
\bibitem {DP}
D.Z. Dokovic and V.P. Platonov, Linear preserver problems and algebraic groups, Math.
Ann. 303 (1995), pp. 165--184.
\bibitem {GLS}
A. Guterman, C.K. Li, and P. $\check{\mathrm S}$emrl, Some general techniques on linear preserver problems,
Linear Algebra Appl. 315 (2000), pp. 61--81.
\bibitem {BLP}
L.B. Beasley, C.K. Li, and S. Pierce, Miscellaneous preserver problems. A survey of linear
preserver problems, Linear Multilinear Algebra 33 (1992), pp. 109--119.
\bibitem {GLR}
R.M. Guralnick, C.K. Li, and L. Rodman, Multiplicative maps on invertible matrices that
preserve matricial properties, Electron. J. Linear Algebra 10 (2003), pp. 291--319.
\bibitem {CL}
W.L. Chooi and M.H. Lim, Linear preservers on triangular matrices, Linear Algebra
Appl. 269 (1998), pp. 241--255.

%commutativity

\bibitem {BD}
H.E. Bell and M.N. Daif, On commutativity and strong commutativity-preserving maps,
Can. Math. Bull. 37 (1994), pp. 443--447.
\bibitem {BS1}
M. Bre$\check{\mathrm s}$ar and P. $\check{\mathrm S}$emrl, On bilinear maps on matrices with applications to commutativity
preservers, J. Algebra 301 (2006), pp. 803--837.


%nilpotency

\bibitem {BPW}
P. Botta, S. Pierce, and W. Watkins, Linear transformations that preserve the nilpotent
matrices, Pac. J. Math. 104 (1983), pp. 39--46.

%normality

\bibitem {BS2}
M. Bre$\check{\mathrm s}$ar and P. $\check{\mathrm S}$emrl, Normal-preserving linear mappings, Can. Math. Bull. 37 (1994),
pp. 306--309.
\bibitem {N}
M. Niezgoda, On linear preservers of normal maps, Linear Algebra Appl. 426 (2007),
pp. 149--158.


% regularity

\bibitem {BKSS}
L.B. Beasley, K.T. Kang, S.Z. Song, and N.S. Sze, Regular matrices and their strong
preservers over semirings, Linear Algebra Appl. 429 (2008), pp. 209--223.

%similarity

\bibitem {LiP2}
C.K. Li and S. Pierce, Linear operators preserving similarity classes and related results,
Can. Math. Bull. 37 (1994), pp. 374--383.
\bibitem {RS}
H. Radjavi and P. $\check{\mathrm S}$emrl, Non-linear maps preserving solvability, J. Algebra 280 (2004),
pp. 624--634.

%idempotent
\bibitem {MB3}
M. Bre$\check{\mathrm s}$ar and P. $\check{\mathrm S}$emrl, Mappings which preserve idempotents, local automorphisms, and
local derivations, Can. J. Math. 45 (1993), pp. 483--496.

%Slowik

\bibitem{Slowik}
R. S{\l}owik, Bijective maps of infinite triangular and unitriangular matrices
preserving commutators, Linear Multilinear Algebra 61 (2013), pp. 1028-1040.

%Wang

\bibitem {ChenWZ}
M. Chen, D. Wang, and H. Zhai, Bijective maps on unit upper triangular matrices
preserving commutators, Linear Multilinear Algebra 59 (2011), pp. 25--40.

%auto classical results

\bibitem {SW}
O. Schreier, B.L.van der Waerden, Die Automorphismen der projektiven gruppen, ABh. Math. Sem. univ. Hamburg 6 (1928), pp. 303--322.

\bibitem {R}
C. E. Rickart, Isomorphic group of linear transformations I, Amer. J. Math. 72 (1950), pp. 451-464.

\bibitem {D}
J. Dieudonne, On the automorphisms of the classical groups, Mem. Amer. Math. Soc. 2 (1951), pp. 1--95.

\bibitem {HolGupta}
C. K. Gupta, W. Ho{\l}ubowski, Commutator subgroup of Vershik-Kerov group II,
Linear Algebra Appl. 471 (2015), pp. 85--95.

\end{thebibliography}
\end{document}